\theoremstyle{plain} 
\newtheorem{theorem}{Theorem}
\theoremstyle{definition}
\newtheorem{definition}{Definition}
\theoremstyle{lemma}
\newtheorem{lemma}{Lemma}
\theoremstyle{remark}
\newif\ifShowLabels
\newdimen\theight
\def\TeXref#1{%
     \leavevmode\vadjust{\setbox0=\hbox{{\tt
               \quad\quad #1}}%
                    \theight=\ht0
                         \advance\theight by \dp0
                              \advance\theight by \lineskip
                                   \kern -\theight \vbox to
                                             \theight{\rightline{\rlap{\box0}}%
                                                   \vss}%
                                                         }}%
\newcommand{\refL}[1]{Lemma~\ref{L:#1}}
\newcommand{\refT}[1]{Theorem~\ref{T:#1}}
\newcommand\refEq[1]{(\ref{Eq:#1})}
\newcommand\refS[1]{Section~\ref{S:#1}}
\newcommand\refTa[1]{Table~\ref{Ta:#1}}
\mathchardef\sim="2218
\def\partauthor{%
  \def\author##1{\newline\phantom{Part 1.\enspace}{\mdseries\scshape##1}}}
\def\@part[#1]#2{%
  \ifnum \c@secnumdepth >-2\relax \refstepcounter{part}%
    \addcontentsline{toc}{part}{\partname\ \thepart.%
        \protect\partauthor\protect\enspace\protect\noindent#1}%
  \else
    \addcontentsline{toc}{part}{#1}\fi
  \begingroup
  \def\author##1{\\ \hfil \\ \fontsize{\@xivpt}{20}\mdseries\itshape ##1}%
  \centering
  \ifnum \c@secnumdepth >-2\relax
       {\fontsize{\@xviipt}{22}\bfseries
         \partname\ \thepart} \vskip 20\p@ \fi
  \fontsize{\@xxpt}{25}\bfseries
      #1\vfil\vfil\endgroup \newpage\thispagestyle{empty}}
\def\@spart#1{\addcontentsline{toc}{part}%
    {\protect\partauthor\protect\noindent#1}%
  \begingroup
  \def\author##1{\\ \hfil \\ \fontsize{\@xivpt}{20}\mdseries\itshape ##1}%
  \centering
  \fontsize{\@xxpt}{25}\bfseries
     #1\vfil\vfil\endgroup \newpage\thispagestyle{empty}}
\newcommand\pl{+_3}
\newcommand\comz{\f C}
\newcommand\opar{\textnormal{(}}     
\newcommand\cpar{\textnormal{)}}     
\newcommand\qeddef{\qed}
\newcommand\relsum{\setbox0\hbox{$+$}\ensuremath
    \mathop{\rlap{\lower.31ex\hbox to\wd0{\hfil\kern-.02em \upshape,\hfil}}+}}
\newcommand\vp{\varphi}
\newcommand\pair[2]{(#1,#2)}
\newcommand\smbcomma{\,,\,}
\newcommand\conv{{}^{\scriptstyle\smallsmile}}
\newcommand{\mynegspace}{{\mskip-1.4mu}}
\newcommand\ident{1\mynegspace\textnormal{\rq}}
\newcommand\diver{0\textnormal{\rq}}
\newcommand\cc[2]{#1_{#2}}
\newcommand\com[1]{{\mathfrak{Cm}}(#1)}
\newcommand\inv{{}^{\scriptstyle -1}}
\newcommand\ssm{^{\scriptstyle\smallsmile}}
\newcommand\tsum{\textstyle \sum}
\newcommand\myspace{{\hspace*{.2pt}}}
\newcommand\mmyspace{{\hspace*{.5pt}}}
\newcommand\mo{^{-1}}
\newcommand\f[1]{{\mathfrak{#1}}}
\newcommand\ff[2]{{\mathfrak{#1}_{#2}}}
\newcommand\rp{\mmyspace\vert\mmyspace}
\newcommand\id[1]{id_{#1}}
\newcommand\di[1]{di_{#1}}
\newcommand\scir{\raise2pt\hbox{$\,\scriptscriptstyle\circ\,$}}
\newcommand\al{\alpha}
\newcommand\be{\beta}
\newcommand\ga{\gamma}
\newcommand\per{\textnormal{\myspace.\ }}
\newcommand\comma{\textnormal{\myspace,\ }}
\newcommand\semicolon{\textnormal{\myspace;\ }}
\newcommand{\mc}[1]{\mathcal{#1}}
\newcommand\ii{\iota}
\newcommand\comment[1]{}
  \def\SpanTwo{
    \def\ampersand@{&\omit\span\omit\span}%
    \iffalse}\fi
\newcommand\varnot{\varnothing}
\begin{document}\comment{
\title[The independence of Tarski's axioms]{The independence of Tarski's axioms for the theory of relation
algebras}}
\title[Axiomatic foundations of the calculus of relations]{On Tarski's axiomatic foundations of the calculus of relations}
\author{Hajnal Andr\'eka}
\address{ Alfr\'ed R\'enyi Institute of Mathematics\\ Hungarian Academy of Sciences\\ Budapest, PF.\,127\\
H-1364 Hungary.} \email{andreka@renyi.hu}
\author{Steven Givant}
\address{ Mills College\\ Department of Mathematics and Computer Science\\ 5000 MacArthur Boulevard\\ Oakland, CA 94613\\ United States}
\email{givant@mills.edu}
\author{Peter Jipsen}
\address{Chapman University\\ Faculty of Mathematics\\ School of Computational Sciences\\ 545 West Palm Avenue\\
Orange, CA 92866\\ United States}
\email{jipsen@chapman.edu}
\author{Istv\'an N\'emeti}
\address{ Alfr\'ed R\'enyi Institute of Mathematics\\ Hungarian Academy of Sciences\\ Budapest, PF.\,127\\
H-1364 Hungary.} \email{nemeti.istvan@mta.renyi.hu}


\subjclass{03G15, 03B30, 03C05, 03C13}



\begin{abstract} It is shown that Tarski's set of ten axioms for the
calculus of relations is independent in the sense that no axiom
can be derived from the remaining axioms.  It is also shown that
by modifying one of Tarski's  axioms slightly, and in fact by
replacing the right-hand distributive law for relative
multiplication with its left-hand version, we arrive at an
equivalent set of axioms which is redundant in the sense that one
of the axioms, namely the second involution law, is derivable from
the other axioms. The set of remaining   axioms is independent.
Finally, it is shown that if both the left-hand and right-hand
distributive laws for relative multiplication are included in the
set of axioms, then two of Tarski's other axioms become redundant,
namely the second involution law and the distributive law for
converse. The  set of remaining axioms is independent and
equivalent to Tarski's axiom system.
\end{abstract}

\maketitle

\section{Introduction} In a series of publications over a period of 40 years, Augustus De\,Morgan\,\cite{dm},
Charles Sanders Peirce (see, in particular, \cite{pe}) and Ernst
Schr\"oder\,\cite{sc}  developed a \textit{calculus of binary
relations} that extended Boole's calculus of classes. At the time
it was considered one of the cornerstones of mathematical logic,
and indeed, in 1903 Bertrand Russell\,\cite{ru} wrote:
\begin{quote} The subject of symbolic logic is formed by three
parts: the calculus of propositions, the calculus of classes, and
the calculus of relations.
\end{quote}  The celebrated theorem of  L\"owenheim\,\cite{low} (which today
would be formulated as stating that every formula   valid in some
model must in fact be valid in some countable model) was proved in
the framework of the calculus of relations.

Interest in the theory gradually dwindled after L\"owenheim's
paper, until 1941, when Alfred Tarski\,\cite{t41} introduced an
abstract algebraic axiomatization of the calculus of relations,
announced several very deep results that he had obtained
concerning the theory, and raised a number of problems that
stimulated interest and research in the subject for decades to
come. Tarski's original axiomatization did not consist entirely of
equations, but he indicated that such an equational axiomatization
was possible (see pp.\,86--87 of \cite{t41}), and by 1943 he had
worked out such an axiomatization (see \cite{t44}). This
equational axiomatization, with minor variations, has subsequently
been used by almost all researchers in the field; see, for
example, Chin-Tarski\,\cite{ct}, Hirsch-Hodkinson\,\cite{hh02},
J\'onsson\,\cite{j82},\cite{j91}, Maddux\,\cite{ma06},
McKenzie\,\cite{mc70}, Monk\cite{mo64}, and
Tarski-Givant\,\cite{tg}, to name but a few. The models of this
set of axioms are called \textit{relation algebras}.

In the early 1940s, J.\,C.\,C.\,McKinsey showed that one of
Tarski's axioms,    the associative law for relative
multiplication, is independent of the remaining axioms of the
theory.  This result was not published at the time, but Tarski
preserved McKinsey's independence model  by presenting it in an
appendix to an unpublished monograph~\cite{t44}, written during
the period 1942--1943.\footnote{McKinsey's independence model is
briefly discussed on pages 357--358 of \cite{ma06}.} For some
time, no further work was done on the question of the independence
of the axioms, but in 1950, Kamel\,\cite{ka52} introduced a rather
different axiomatization and proved   its equivalence to the one
given in Tarski\,\cite{t41}. He established the independence of
some, but not all, of his axioms (see also Kamel\,\cite{ka54}). It
seems that the content of Kamel's work remained
 unknown to   Tarski's school, although there is a reference to \cite{ka54}  in the bibliography of
 \cite{hmt71}.\footnote{Tarski never referred to the paper in his 1970 course on relation algebras
 nor did he mention the paper to Givant during their long collaboration.  As far as we know, with the
 exception of the bibliographic reference in \cite{hmt71}---which finds no echo in the text of \cite{hmt71}---Kamel's work is not referred
 to in any other papers or books originating with members of Tarski's school and dealing with the subject of relation algebras.}

In a graduate topics course on relation algebras that he held at
the University of California at Berkeley in 1970, Tarski talked
briefly about
 McKinsey's result  and  mentioned
that no further work had been done to investigate the independence
of the remaining axioms. The main purpose of the present paper is
to fulfill the goal implicit in Tarski's remark by demonstrating
the independence of all of Tarski's  axioms. The second purpose of
the paper is to show that with a very minor variation in one of
the axioms, namely by using the left-hand form of the distributive
law for relative multiplication instead of the right-hand form,
one of the remaining axioms---namely, the second involution
law---does indeed become derivable from the other axioms and is
therefore not independent of them.  With this redundant law
excluded, the resulting set of axioms is independent and is
equivalent to Tarski's system.  The final purpose of the paper is
to show that if both the left-hand and right-hand distributive
laws for relative multiplication are included in the axiom set,
then two other axioms---namely, the second involution law and the
distributive law for converse---become derivable from the
 set of axioms obtained by excluding these two  laws, and   therefore they are not independent.  With these two redundant laws excluded, the resulting set
of axioms is independent and is equivalent to Tarski's system.

 The original independence models presented in this paper, with
 the exception of those for (R7) and (R9),
were discovered ``by hand", without the aid of a computer.  They
are different from Kamel's models.   Subsequently, a systematic
search, using the model searching program Mace4, developed by
William McCune\,\cite{mcc}, was employed to discover the remaining
two independence models, albeit in very different forms from those
presented here. These models were then analyzed  ``by hand" in
order to understand their true natures and underlying structures.
Mace4 was also employed to determine the minimality of the size of
some of the independence models.

There are several aspects of the paper to which we would like to
draw the readers attention. First, as already mentioned, it
completes the task, begun more than 70 years ago, of showing that
Tarski's axiom system is independent. Second, the independence
models are of some interest in their own right, and may motivate
further study of classes of algebras in which, say, all but one of
Tarski's axioms hold.  The work of Roger Maddux can serve as a
paradigm for such investigations.  In a series of papers
\cite{ma78},  \cite{ma82}, \cite{ma83}, \cite{ma90}, \cite{ma91}
(see also \cite{ma06}), he has studied classes of algebras in
which all of the axioms except the associative law hold, and he
has established interesting and important connections between
these classes of algebras and certain restricted forms of
first-order logic in which there are only three  variables. Third,
although the independence models presented in this paper are
specifically constructed for the purpose of demonstrating the
independence of Tarski's axioms, modifications of these models may
prove useful in establishing independence results for
axiomatizations of other systems of algebraic logic, for example
cylindric algebras and polyadic algebras.  Fourth, future
researchers may find it advantageous to use one of the alternative
axiomatizations of the theory of relation algebras that are
suggested in this paper. They may also find the results in the
paper helpful in determining the role that each of Tarski's axioms
plays in the derivation of various important relation algebraic
laws.  Fifth, as mentioned above, the construction of some of our
independence models has been facilitated by the use of a computer,
and this may resonate with computer scientists who are familiar
with the use of relational methods in computer science.  This
applies in particular to researchers within the RAMiCS community.
Finally, as the referee has pointed out, the results in this paper
may serve as a salutary lesson for readers who believe that the
independence of Tarski's axiom system is not very surprising.
Indeed, as already mentioned, we show that if the right-hand
distributive law for relative multiplication is replaced by what
seems to be a harmless variant, namely the left-hand version of
the law, then one of Tarski's other axioms does turn  out to be
redundant; and if both distributive laws are adopted as axioms,
then two of Tarski's other axioms turn out to be redundant.

We are indebted to Roger Maddux for several remarks that have
helped us to improve the paper, and in particular for suggesting
the use of \refL{lm3.2} (which occurs as part of Theorem 313
in~\cite{ma06}) in order to simplify our original proofs of
Theorems \ref{T:tv} and \ref{T:tv2}.  We are also indebted to the
referee for a very careful reading of the paper, and for several
very helpful suggestions. In particular, he suggested the
inclusion of the remarks in the preceding paragraph, and he also
suggested the current proof of \refL{lm10}, which is much simpler
than our original proof.

\section{Axioms and fundamental notions}\label{S:asec1.1}

Unless explicitly stated otherwise, all algebras below   have the
form
\[\f A=( A\smbcomma +\smbcomma -\smbcomma
;\smbcomma\,\conv\smbcomma\ident\,)\comma   \] where  $A$ is a
non-empty set of elements,  while  $\,+\,$ and $\,;\,$ are binary
operations on $A$, and $\,-\,$ and $\,\conv\,$  are unary
operations on $A$, and $\ident$ is a distinguished constant in
$A$. The set~$A$ is called the \textit{universe} of $\f A$, the
\textit{Boolean} operations $\,+\,$ and~$\,-\,$ are called
\textit{addition} and \textit{complement} respectively, and the
\textit{Peircean} operations $\,;\,$ and~$\,\conv\,$ are called
\textit{relative multiplication} and \textit{converse}
respectively. The distinguished Peircean constant $\ident $ is
called the \textit{identity element}.
  \begin{definition}\label{D:ra} A
\textit{relation algebra} is an algebra $\f A$ in which the
following axioms are satisfied for all elements $r$, $s$,
and $t$ in $\f A$.
\begin{enumerate} \item [(R1)]  $r+s=s+r$\per \item [(R2)]
$r+(s+t)=(r+s)+t$\per \item [(R3)] $-(-r+s) + -(-r+-s)=r$\per
\item[(R4)]
$r;(s;t)=(r;s);t$\per
\item[(R5)]
$r;\ident=r$\per
\item[(R6)]$r\conv\conv=r$\per
\item[(R7)]
$(r;s)\conv=s\conv;r\conv$\per
\item[(R8)]
$(r+s);t=r;t +s;t$\per
\item[(R9)]
$(r+s)\conv=r\conv +s\conv$\per
\item[(R10)]$r\conv;-(r;s)+
-s=-s$.\qeddef
\end{enumerate}
\end{definition}

 Axiom (R1) is called the \textit{commutative law for addition},
(R2) is the \textit{associative law for addition},  (R3) is
\textit{Huntington's law},  (R4) is the \textit{associative law
for relative multiplication},  (R5) is the \textit{right-hand
identity law for relative multiplication}, (R6) is the
\textit{first involution law},  (R7) is the \textit{second
involution law},  (R8) is the \textit{right-hand distributive law
for relative multiplication}, (R9) is the \textit{distributive law
for converse}, and (R10) is \textit{Tarski's law}.  The
\textit{left-hand distributive law for relative multiplication},
\begin{equation}\tag{R8$'$}\label{Eq:r8}  r;(s+t)=r;s+r;t \comma
\end{equation}
  will also play a role in our  discussion.\comment{When
it is necessary to distinguish between the operations of different
relation
 algebras, we shall occasionally use  superscripts,   as in
\[\f A=( A\smbcomma +^{\f A}\smbcomma -^{\f A}\smbcomma
;^{\f A}\smbcomma\,\conv{}^{\f A}\smbcomma\ident^{\f A}\,)
\]}

 The conventions regarding the
order in which operations are to be performed are as follows:
unary operations take precedence over binary operations, and among
binary operations, multiplications take precedence over additions.
(It is unnecessary in this paper to establish a convention
regarding the order in which different unary operations are to be
performed, as we shall always use parentheses to make this order
clear.) For example, in fully parenthesized form, axioms (R7),
(R8), and~(R10) might be written as
\begin{gather*}
  (r;s)\ssm=(s\ssm);(r\ssm),\qquad (r+s);t=(r;t)+(s;t),\\
  \intertext{and}
((r\conv);(-(r;s)))+ (-s)=-s\per \end{gather*}

Axioms (R1)--(R3) say  that the \textit{Boolean part}  of a
relation algebra~$\f A$, namely the algebra $( A \smbcomma
+\smbcomma -)$, is a Boolean algebra. In particular, the notions
and
 laws\index{law} from the theory of Boolean algebras apply to relation algebras.
 For example, the binary operation $\,\cdot\,$ of \textit{multiplication},
 and the distinguished constants $0$ and $1$ (called \textit{zero} and the \textit{unit} respectively) are defined by
\[r\cdot
s=-(-r+-s),\qquad 1=\ident+-\ident,\qquad
0=-1=-(\ident+-\ident)\per
\]  Similarly, the partial order $\,\le\,$ is defined by
\[r\le s\quad\text{if and only if}\quad r+s=s\per\]   The \textit{supremum}, or \textit{sum}, of a subset $X$ of $A$ is defined to be the least
upper bound of $X$ in the sense of the partial order $\,\le\,$,
but in the case of infinite subsets, such sums may not exist. An
\textit{atom}\index{atom} is defined to be a minimal, non-zero
element, and a Boolean algebra with additional operations is said
to be \textit{atomic} if every non-zero element is above an atom.
The complement of the distinguished constant $\ident$ is called
the \textit{diversity element} and is denoted by $\diver$.  An
atom below $\diver$ is called a \textit{subdiversity atom}.
Whenever some  laws of Boolean algebra are needed to justify a
step in one of the proofs below, we shall simply say that the step
is justified ``\textit{by} (the laws of) \textit{Boolean
algebra}".

Axioms (R4)--(R7) say that the \textit{Peircean part} of a
relation algebra $\f A$, namely the algebra
$(A\smbcomma;\smbcomma\conv\smbcomma\ident)$,  is a monoid with an
 involution that is an anti-isomorphism.  Axioms (R8) and (R9) ensure that
relative multiplication is distributive on the right, and converse
is distributive, over addition.  The binary operation $\,;\,$ is
said to be \textit{completely distributive}, or to
\textit{distribute over arbitrary sums}, if for all subsets $X$
and $Y$ (including infinite subsets and also the empty subset) of
$\f A$, the existence of the sums (or \textit{suprema})~$\tsum X$
and~$\tsum Y$ implies that the sum
\[\tsum\{r;s:r\in X\text{ and }s\in Y\}\] exists and is equal to
$(\tsum X);(\tsum Y)$. A similar definition applies to the
operation of converse.

There are other versions of (R10) that are useful.  For example,
it is clear from the definition of the partial order $\,\le\,$
that if (R1)--(R3) are valid in a model $\f A$, then the validity
of (R10) in $\f A$ is equivalent to the validity of the inequality
\begin{equation}\tag{R10$'$}\label{Eq:r10} r\ssm;-(r;s)\le -s
\end{equation} in $\f A$.
  We shall often make use of this
equivalence in establishing the validity or failure of (R10) in a
model. There is yet another form of (R10) that we shall need.  If
(R1)--(R3) and either \refEq{r8} or else (R6)--(R9) are valid in a
model $\f A$, then the validity of  (R10) in $\f A$ is equivalent
to the validity of the implication
\begin{align*} (r;s)\cdot t=0\qquad&\text{implies}\qquad (r\ssm;t)\cdot
s=0\tag{R11}\label{Eq:r11}\\ \intertext{in $\f A$  (see \refL{lm2}
 and the remark following it in \refS{variant}). If, in addition, the model $\f A$ is
atomic, and if the Peircean operations distribute over arbitrary
sums, then the validity of \refEq{r11} (and hence also of (R10))
is equivalent to the validity of the implication \refEq{r11} for
atoms.  It is often convenient to use the implication \refEq{r11}
in its contrapositive form:} (r\ssm;t)\cdot s\neq
0\qquad&\text{implies}\qquad (r;s)\cdot t\neq 0\per\\
\intertext{In this form, the   version of \refEq{r11} for atoms
assumes the form}
  s\le
r\ssm;t\qquad&\text{implies}\qquad t\le r;s
\end{align*} for all atoms $r$, $s$, and $t$ in $\f A$.

There are two more laws that will play a role in the discussion
below (see \refL{lm1} in \refS{variant}), namely the implications
\begin{alignat*}{3}
r&\le s&\qquad&\text{implies}&\qquad r;t&\le s;t\comma\\
\intertext{and} r&\le s&\qquad&\text{implies}&\qquad t;r&\le
t;s\per
\end{alignat*} They are respectively called the \textit{left-hand} and the \textit{right-hand}\textit{ monotony laws for relative multiplication}.

\comment{In more detail, the left-hand distributive law \refEq{r8}
is derivable from (R6), (R7), and (R9), (see the first part of the
proof of \refT{tv} below). The left-hand distributive law easily
implies the right-hand monotony law for relative multiplication.
Also, the validity of \refEq{r100} for atoms, combined with the
  distributivity of relative multiplication and converse
over arbitrary sums, implies the validity  of \refEq{r100} for
arbitrary elements $r$, $s$, and $t$. Finally,

\cite{g}, \cite{hh02}, or \cite{ma06}).}

\section{Examples of relation algebras}\label{S:models}

The first task of this paper is the construction of   independence
models for each of Tarski's axioms (R1)--(R10).  The models will
often be obtained by taking  well-known relation algebras and
modifying one or more of their operations in some way.  In this
section, we briefly describe the relation algebras that will be
used to construct independence models.

The classic   example  motivating the entire theory  of relation
algebras is the algebra of all binary relations on a set $U$.  The
universe of this algebra is the set of all (binary) relations on
$U$.  The operations of the algebra are union, complement (with
respect to the universal relation $U\times U$), relational
composition, and converse, which are respectively defined by
\begin{gather*}
R\rp S=\{\pair \al\be: \pair\al\ga\in R\text{ and }\pair\ga\be\in
S\text{ for some }\ga\in U\}\\ \intertext{and}
R\mo=\{\pair\al\be:\pair\be\al\in R\}\per
\end{gather*}
 The distinguished constant is the identity relation~$\id U$
on~$U$. The algebra is called the \textit{full set relation
algebra} on $U$.

\begin{table}[htb]
\begin{center} \begin{tabular}{ c | c | c | c | c |} $\,\rp\,$ &
\,$\varnothing$\, & $\id U$ & $\di U$ & \,$U\times U$\,\\ \hline
$\varnothing $ & $\varnothing$ & $\varnothing$ & $\varnothing$ &
$\varnothing$\\ \hline $\id U$ & $\varnothing$ & $\id U$ & $\di U$
& $U\times U$\\  \hline $\di U$ &  $\varnothing$   & $\di U$ &
$U\times U $ & $U\times U $\\ \hline $U\times U$\, & $\varnothing$
& \,$U\times U$\, & $U\times U $ & $U\times U$\\ \hline
\end{tabular}
\end{center} \vspace{.1in}\caption{Relational composition table for $\ff M3$\per}\label{Ta:minsquare}
\end{table}

A more general class of examples  is obtained by allowing the
universe to be an arbitrary set of relations on~$U$ that contains
the universal relation and the identity relation, and that is
closed under the operations of union, complement, relational
composition, and converse. Such algebras, which are called
\textit{set relation algebras}, are subalgebras of the full set
relation algebra on $U$.  For example, fix an arbitrary set $U$ of
cardinality three, and consider the set~$\cc M3$ consisting of the
empty relation $\varnothing$, the identity relation~$\id U$, the
diversity relation $\di U$ (the complement of the identity
relation), and the universal relation $U\times U$. Certainly,~$\cc
M3$ is a subset of the full set relation algebra on $U$, and it
contains the universal relation and the identity relation on $U$.
It is clear that $\cc M3$ is closed under the Boolean operations
of union and complement, and it is equally clear that $\cc M3$ is
closed under the operation of converse, because every relation $R$
in $\cc M3$ is symmetric in the sense that $R\mo=R$. The
relational composition of two relations in $\cc M3$ is again a
relation in $\cc M3$, as \refTa{minsquare} shows, so  $\cc M3$ is
closed under the operation of relational composition. Conclusion:
$\cc M3$ is the universe of a set relation algebra $\ff M3$, and
in fact $\ff M3$ is  the minimal set relation algebra on a set of
cardinality three.

Another class of examples of relation algebras may be constructed
from Boolean algebras. Fix a Boolean algebra $(A\smbcomma
+\smbcomma -)$, define relative multiplication and converse on $A$
to be the operations of Boolean multiplication and the identity
function respectively, and take the identity element to be the
Boolean unit, so that
\[r;s=r\cdot s,\qquad r\conv=r,\qquad\ident =1\] for all $r$ and $s$.  The resulting
algebra $\f A$   is easily seen to be a relation algebra, and  it
is called a \textit{Boolean relation algebra}. A concrete instance
of this construction is provided by the two-element Boolean
algebra, whose universe consists of the elements $0$ and $1$.

A third class of examples of relation algebras may be constructed
from groups. Fix a group
\[( G\smbcomma \scir\smbcomma\inv\smbcomma \ii)\] with a binary
composition operation $\,\scir\,$, a unary inverse operation
$\,\inv\,$, and an identity element~$\ii$.  Take $A$ to be the set
of all subsets of $G$.  Obviously, $A$ is closed under arbitrary
unions and under complements (formed with respect to $G$). Define
operations $\,;\,$ and $\,\conv\,$ of \textit{complex
multiplication} and \textit{complex inverse}  by
\begin{equation*}
  X; Y=\{f\scir g:f\in X\text{ and } g\in
  Y\}\quad\text{and}\quad X\ssm=\{f\mo:f\in X\}
\end{equation*} for all subsets $X$ and $Y$ of $G$, and take the distinguished element $\ident$ to be
the singleton of the group identity element, $\{\ii\}$. The
resulting algebra $\f A$ (in which addition and complement are
defined to be the set-theoretic operations of union and
complement) is a relation algebra, as was shown by McKinsey some
time in the
 1940s (see \cite{jt48}). It is called the \textit{complex
algebra}
  of the group $G$.  For a concrete instance of this construction,
  take $G$ to be the additive group of integers modulo~$3$.
  The operations of relative multiplication and converse in the
  complex
  algebra of this group are set forth in  Tables \ref{Ta:cz3} and \ref{Ta:mz3} respectively.

\begin{table}[htb]
  \centering\begin{tabular}{|c|c|c|c|c|c|c|c|c|}\hline
     $\,;\,$& $\varnot$ & $\{0\}$ & $\{1\}$ & $\{2\}$ & $\{0,1\}$ & $\{0,2\}$ & $\{1,2\}$ & $\{0,1,2\}$ \\ \hline
     $\varnot$& $\varnot$ & $\varnot$ & $\varnot$ &$\varnot$ & $\varnot$ & $\varnot$ & $\varnot$ & $\varnot$ \\ \hline
    $\{0\}$ & $\varnot$ &$\{0\}$ & $\{1\}$ & $\{2\}$ & $\{0,1\}$ & $\{0,2\}$ & $\{1,2\}$ & $\{0,1,2\}$ \\ \hline
    $\{1\}$ & $\varnot$ & $\{1\}$ & $\{2\}$ & $\{0\}$ & $\{1,2\}$ & $\{0,1\}$ & $\{0,2\}$ & $\{0,1,2\}$ \\ \hline
     $\{2\}$ & $\varnot$ & $\{2\}$ & $\{0\}$ & $\{1\}$ & $\{0,2\}$ & $\{1,2\}$ & $\{0,1\}$ & $\{0,1,2\}$ \\ \hline
     $\{0,1\}$ & $\varnot$ & $\{0,1\}$ & $\{1,2\}$ & $\{0,2\}$ & $\{0,1,2\}$ & $\{0,1,2\}$ & $\{0,1,2\}$ & $\{0,1,2\}$ \\ \hline
     $\{0,2\}$& $\varnot$ & $\{0,2\}$ & $\{0,1\}$ & $\{1,2\}$ & $\{0,1,2\}$ & $\{0,1,2\}$ & $\{0,1,2\}$ & $\{0,1,2\}$ \\ \hline
     $\{1,2\}$ & $\varnot$ & $\{1,2\}$ & $\{0,2\}$ & $\{0,1\}$ & $\{0,1,2\}$ & $\{0,1,2\}$ & $\{0,1,2\}$ & $\{0,1,2\}$ \\ \hline
  $\{0,1,2\}$ & \ \ $\varnot$\ \ \  & $\{0,1,2\}$ & $\{0,1,2\}$ & $\{0,1,2\}$ & $\{0,1,2\}$ & $\{0,1,2\}$ & $\{0,1,2\}$ & $\{0,1,2\}$ \\ \hline
 \end{tabular}\vspace{.1in}
  \caption{Relative multiplication table for   the complex algebra of the group of integers modulo $3$.}\label{Ta:cz3}
\end{table}

\begin{table}[htb]
  \centering\begin{tabular}{|c|c|}\hline
     $r$ &   $r\ssm$   \\ \hline
     $\varnot$& $\varnot$   \\ \hline
    $\{0\}$ & $\{0\}$   \\ \hline
    $\{1\}$ & $\{2\}$  \\ \hline
     $\{2\}$ & $\{1\}$  \\ \hline
     $\{0,1\}$ & $\{0,2\}$   \\ \hline
     $\{0,2\}$& $\{0,1\}$ \\ \hline
     $\{1,2\}$ & $\{1,2\}$   \\ \hline
  $\{0,1,2\}$ &  $\{0,1,2\}$  \\ \hline
 \end{tabular}\vspace{.1in}
  \caption{Converse table for the complex algebra of the group of integers modulo $3$.}\label{Ta:mz3}
\end{table}

For the fourth and final example of a relation algebra, consider
the eight-element Boolean algebra~$(D\smbcomma +\smbcomma -)$ with
three atoms, say $\ident$, $a$, and $b$. Define an operation of
relative multiplication on these atoms as in \refTa{b3}, and
extend this operation to all of~$D$ by requiring it to be
distributive over arbitrary sums (see \refTa{b4}, and keep in mind
that $\diver=-\ident=a+b$). Take converse to be the identity
function on $D$. Lyndon\,\cite{l56} was the first to observe the
resulting algebra $\f D$ is a relation algebra. (This algebra is
discussed on page 429 of   Maddux\,\cite{ma06}, where it is
denoted by $\cc 77$; see, in particular, Table 34.)  In fact, $\f
D$ can be represented as a set relation algebra, and also as a
subalgebra of the complex algebra of the group $G\times G$,
where~$G$ is the additive group of integers modulo $3$, but these
observations will not play a role in the discussion below.

\begin{table}[htb]\centering\begin{tabular}{ l | c | c | c|}
\,$\,\,;\,$\, & \,$\ident$\, & \,$a$\, & \,$b$\,\\ \hline $\ident$
& $\ident$ & $a$& $b$\\ \hline $a$ & $a$  & $1$ & $\diver$\\
\hline $b$ & $b$ & $\diver$ & $1$\\ \hline
\end{tabular}\vspace{.1in}\caption{Relative multiplication table
for the atoms in $\f D$}\label{Ta:b3}\end{table}

\begin{table}[htb]
  \centering\begin{tabular}{|c|c|c|c|c|c|c|c|c|}\hline
     $\,;\,$& $0$ & $\ident$ & $a$ & $b$ & $\ident+a$ & $\ident +b$ & $\diver$ & $1$ \\ \hline
     $0$& $0$ & $0$ & $0$ &$0$ & $0$ & $0$ & $0$ & $0$ \\ \hline
    $\ident$ &  $0$ & $\ident$ & $a$ & $b$ & $\ident+a$ & $\ident +b$ & $\diver$ & $1$\\ \hline
    $a$ & $0$ & $a$ & $1$ & $\diver$ & $1$ & $\diver$ & $1$ & $1$ \\ \hline
     $b$ & $0$ & $b$ & $\diver$ & $1$ & $\diver$ & $1$ & $1$ & $1$ \\ \hline
     $\ident+a$ & $0$ & $\ident +a$ & $1$ & $\diver$ & $1$ & $1$ & $1$ & $1$ \\ \hline
     $\ident + b$& $0$ & $\ident+b$ & $\diver$ & $1$ & $1$ & $1$ & $1$ & $1$ \\ \hline
     $\diver$ & $0$ & $\diver$ & $1$ & $1$ & $1$ & $1$ & $1$ & $1$ \\ \hline
  $1$ & \ \ $0$\ \ \  & $1$ & $1$ & $1$ & $1$ & $1$ & $1$ & $1$ \\ \hline
 \end{tabular}\vspace{.1in}
  \caption{Relative multiplication table for   $\f D$.}\label{Ta:b4}
\end{table}

\section{Independence}\label{S:theorem}

A mathematical statement $\vp$ is said to be \textit{independent}
of a  set  of mathematical statements $\Phi$ (with respect to a
given logical framework) if $\vp$ cannot be derived from~$\Phi$
(within the given logical framework).   A set
 of axioms $\Phi$ is said to be \textit{independent} if each $\vp$
in $\Phi$ is independent of the set of axioms  obtained from
$\Phi$ by removing $\vp$. In other words, $\Phi$ is independent if
none of the axioms in $\Phi$ can be derived from the remaining
axioms in~$\Phi$.  The standard way of establishing the
independence of a set of axioms is to construct for each axiom
$\vp$, a model in which~$\vp$ fails and the remaining axioms are
valid. Such a model is called an \textit{independence model} for
$\vp$. The first task  of the present paper is to prove the
following theorem.
\begin{theorem}\label{T:theorem}
 The set of Tarski's axioms \textnormal{(R1)--(R10)} is independent\per
\end{theorem}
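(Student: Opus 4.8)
The plan is to follow the standard method recalled just above: for each index $i$ with $1\le i\le 10$ we produce an \emph{independence model} $\f A_i$, that is, an algebra of the correct similarity type in which (R$i$) fails while the remaining nine axioms (R$j$), $j\ne i$, all hold. Independence of the system (R1)--(R10) is, by definition, the conjunction of the ten statements ``(R$i$) is not derivable from the other nine axioms'', and the model $\f A_i$ witnesses the $i$-th of these; so exhibiting all ten models proves the theorem. I would organize the construction in three groups, treating the purely ``local'' cases first and the recalcitrant ones last.

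Most of the models will be obtained by taking one of the relation algebras of \refS{models} --- a full set relation algebra, the minimal algebra $\ff M3$, the two-element Boolean relation algebra, the complex algebra of the group of integers modulo $3$, or Lyndon's algebra $\f D$ --- and \emph{perturbing a single operation} while leaving the rest of the structure intact. For the Peircean axioms this is comparatively transparent: to break (R5) one takes $\ident$ to be an element that is not a two-sided unit for $\,;\,$; to break (R6) one redefines $\,\conv\,$ so that it is no longer an involution; to break (R4) one uses a non-associative modification of $\,;\,$ (in the spirit of McKinsey's classical example); and to break (R10) one alters $\,;\,$ on a Boolean relation algebra or on $\f D$. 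Since none of these perturbations touches the Boolean reduct, axioms (R1)--(R3) hold automatically, and one need only verify the surviving Peircean axioms, which for a finite model is a matter of inspecting the relevant operation tables in the manner of \refS{models}.

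Refuting a Boolean axiom (R1)--(R3), or one of the distributive laws (R8), (R9), takes more care. For (R1)--(R3) one works over a small finite universe, redefines the offending Boolean operation ``by hand'', checks directly that the other two Boolean identities survive, and then checks the three Peircean axioms that actually mention addition --- namely (R8), (R9), and (R10) --- with respect to the new operations; the axioms (R4)--(R7), which involve neither $\,+\,$ nor $\,-\,$, are unaffected. For (R8) and (R9) one likewise redefines $\,;\,$, respectively $\,\conv\,$, so that right-hand distributivity over $\,+\,$, respectively distributivity of converse, fails while every other axiom persists.

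For each model so constructed one then has to verify the nine surviving axioms. For the finite models this is a finite computation, but two axioms make it laborious --- the associative law (R4) and Tarski's law (R10) --- and for the latter I would exploit the equivalent forms set out in \refS{asec1.1}: the inequality \refEq{r10}, the implication \refEq{r11}, and, when the model is atomic and the Peircean operations are completely distributive, the atomic version of \refEq{r11}, which reduces the verification to a check on atoms. I expect the real obstacle to be not the verifications but the \emph{discovery} of adequate models for the second involution law (R7) and the distributive law for converse (R9): these two, unlike the rest, appear to resist an unaided search, and the natural route is to locate them with a model-finder such as Mace4 and then re-present them in a perspicuous form. Once all ten models are in hand and their defining axioms have been checked, the theorem follows.
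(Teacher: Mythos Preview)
Your strategy is exactly the paper's: for each $n$ one exhibits an independence model $\ff An$ in which (R$n$) fails and the other nine axioms hold, and your sketch of \emph{how} to find these models --- perturb a single operation on a known relation algebra, leave the Boolean reduct alone for the Peircean axioms, use the atomic form of \refEq{r11} to check (R10), and fall back on Mace4 for (R7) and (R9) --- is precisely what the paper does in Sections~5--14.

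The gap is that you have given the shape of the proof but none of its content. A proof of this theorem \emph{is} the ten models together with their verifications; your proposal contains neither. Phrases such as ``take $\ident$ to be an element that is not a two-sided unit'' or ``redefine $\,\conv\,$ so that it is no longer an involution'' do not yet specify an algebra, and the delicate point is that a naive perturbation typically wrecks more than one axiom at a time (for instance, changing converse to break (R6) may also break (R7), (R9), or (R10) unless the choice is made carefully). The paper's actual work consists in pinning down, for each $n$, a specific model --- e.g.\ left projection for $+$ on a Boolean group for (R1), the constantly-zero converse for (R6), a careful modification of the complex algebra of $\mathbb Z_3$ for (R9) --- and then carrying out the nine verifications, several of which (notably (R4) and (R10) for the (R9) model) are genuinely intricate case analyses. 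Until you write down the ten algebras and check the ninety instances, what you have is a plan of attack, not a proof.
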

The proof proceeds by constructing for each $n=1,\dots,10$ an
independence model $\ff An$ for~(R$n$).

\section{Independence of (R1)}\label{S:r1}

Let $(G\smbcomma \scir \smbcomma{}\mo\smbcomma \ii)$ be any
Boolean group of order at least two, with identity element $\ii$,
that is to say, any group with at least two elements in which each
element $r$ is its own inverse, so that $r\mo=r$.    Let addition
be the binary operation of left-hand projection on $G$, which is
defined by
\[r+s=r\] for all $r $ and $s$ in $G$,  take complement  to be the identity operation  on
$G$, and take relative multiplication, converse, and the identity
element to coincide with the corresponding group operations and
identity element, so that
\[r;s=r\scir s,\qquad r\ssm=r\mo=r,\qquad\text{and}\qquad
\ident=\ii\] for all $r$ and $s$ in $G$. In the resulting algebra
$\ff A1$ (of the same similarity type as relation algebras), it is
clear that (R1) fails. Indeed, for distinct elements $r$ and $s$,
we have
\[r+s=r\neq s=s+r\per\]

The sum of any finite sequence of elements in $\ff A1$ is always
the left-most element in the sequence, by the definition of
addition, so the associative law (R2)  holds automatically; in
more detail,
\[r+(s+t)=r=r+t=(r+s)+t\per\] Also Huntington's law (R3) holds:
\[-(-r+s)+-(-r+-s)=(r+s)+(r+s)=r\comma\] by the definitions of
complement and addition.  Axioms (R4)--(R7) hold trivially in~$\ff
A1$, because the Peircean part of $\ff A1$ is, by definition, a
Boolean group, and Boolean groups are always commutative. The
distributive law (R8) holds in $\ff A1$, because
\[(r+s);t=r;t=r;t+s;t\comma\] by the definition of addition.  The
distributive law (R9)  holds trivially in $\ff A1$, since converse
is the identity operation.  Also, Tarski's law  (R10) holds in
$\ff A1$, because
\begin{multline*}
r\ssm;-(r;s)+-s=r\ssm;(r;s)+s=r\ssm;(r;s)\\=r\mo\scir(r\scir s)
=(r\mo\scir r)\scir s=\ii\scir s =s=-s\per
\end{multline*}
These equalities use the fact that complement is the identity
operation on $G$ and addition is the left-hand projection, while
converse, relative multiplication, and the identity element in
$\ff A1$ coincide with the corresponding group operations and
identity element, by definition.

A minimal independence model for (R1) is obtained by starting with
the two-element additive Boolean group of integers modulo $2$.
\section{Independence of (R2)}\label{S:r2}

The independence model $\ff A2$ is defined as follows.  The
universe consists of three elements, $0$, $\ident$, and $1$, and
 the operations of addition, relative multiplication, and
complement are determined as in \refTa{a2}. Converse is defined to
be the identity function on the universe, and the identity element
is defined to be $\ident$.
\begin{table}[htb]\centering\begin{tabular}{l | c | c | c|}
$+$ & \,$0$\, & \,$\ident$\, & \,$1$\,\\ \hline $0$ & $0$ &
$\ident$& $1$\\ \hline $\ident$ & $\ident$  & $\ident$ & $0$\\
\hline $1$ & $1$ & $0$ & $1$\\ \hline
\end{tabular}\qquad\qquad
\begin{tabular}{l | c | c | c|}
$;$ & \,$0$\, & \,$\ident$\, & \,$1$\,\\ \hline $0$ & $0$ & $0$&
$0$\\ \hline $\ident$ & $0$  & $\ident$ & $1$\\ \hline $1$ & $0$ &
$1$ & $\ident$\\ \hline
\end{tabular}
\qquad\qquad
\begin{tabular}{ l | c |}
$r$ &  $-r$\\ \hline $0$ & $0$\\ \hline $\ident$ & $1$\\ \hline
$1$ & $\ident$\\ \hline
\end{tabular}\vspace{.1in}\caption{Addition, relative multiplication, and complement tables for the
algebra $\ff A2$.}\label{Ta:a2}\end{table} The associative law
(R2)  is easily shown to fail in $\ff A2$: just take $r$, $s$, and
$t$ to be $\ident$, $\ident$, and~$1$ respectively to arrive at
\begin{align*}
  r+(s+t)&=\ident+(\ident+1)=\ident +
  0=\ident\\ \intertext{and} (r+s)+t&=(\ident+\ident)+1=\ident
  +1=0\per
\end{align*}

Turn now to the task of verifying   the remaining
 axioms in $\ff A2$.  Axioms (R6) and~(R9) are
 valid because converse is defined to be the identity
function; and (R5) is clearly valid, as a glance at the column for
$\ident$ in the relative multiplication table for $\ff A2$ shows
(see \refTa{a2}). Axiom (R1) is valid because the operation table
for addition is symmetric across the diagonal, and therefore
addition is commutative. Similarly,~(R7) is valid because the
operation table for relative multiplication is symmetric across
the diagonal---so that relative multiplication is
commutative---and converse is the identity function.  It remains
to check (R3), (R4), (R8), and~(R10).

Begin with the verification of (R3). If $r$ is $0$, then
\begin{multline*}
-(-r+s)+-(-r+-s)=-(-0+s)+-(-0+-s)\\=-(0+s)+-(0+-s)=-s+-(-s)=-s+s=0=r\comma
\end{multline*}
by the definitions of complement and addition, and the fact that
the sum of any element and its complement  is always $0$ in $\ff
A2$ (see \refTa{a2}). Similarly, if $s$ is $0$, then
\begin{multline*}
  -(-r+s)+-(-r+-s)=-(-r+0)+-(-r+-0)\\=-(-r)+-(-r)=r\per
\end{multline*}  Assume now that $r$ and $s$ are both
non-zero. If $r=s$, then
\begin{equation*}
-(-r+s)+-(-r+-s)=-(-r+r)+-(-r+-r)=-0+-(-r)=r\per
\end{equation*} The second equality uses two properties of addition in $\ff A2$: it is an idempotent operation in the sense that $t+t=t$
for all $t$; and the sum of an element and its complement is
always $0$. If $r$ and $s$ are distinct, then  $r=-s$ (because
$\ff A2$ has just two non-zero elements, and they  are the
complements of one another), and consequently (R3) is valid for
$r$ and $-s$, by the case  just considered. It follows that
\begin{multline*}
-(-r+s)+-(-r+-s)=-(-r+-s)+-(-r+s)\\=-(-r+-s)+-(-r+-(-s))=r\comma
\end{multline*}
by the commutativity of addition, the fact that $-(-s)=s$, and the
validity of (R3) for $r$ and $-s$.

As regards the associative law (R4), if at least one of the
elements $r$, $s$, and $t$ is~$0$, then both sides of (R4) reduce
to $0$, by the definition of relative multiplication. Similarly,
if at least one of the three elements is $\ident$, then both sides
of (R4) reduce to the relative product of the other two elements.
For example, if $s$ is $\ident$, then
\[r;(s;t)=r;(\ident;t)=r;t\qquad\text{and}\qquad
(r;s);t=(r;\ident);t=r;t\per\]The only other possibility is that
all three elements are $1$, and in this case both sides of (R4)
reduce to $1$.

Turn next to the verification of (R8).  If $t$ is $0$, then both
sides of (R8) reduce to $0$, and if $t$ is $\ident$, then both
sides   reduce to $r+s$, by the definition of relative
multiplication. Similarly, if $r$ is $0$, then both sides of (R8)
reduce to $s;t$, and if~$s$ is $0$ or if $r=s$, then both sides
reduce to $r;t$. There remain the cases when~$t$ is $1$, and $r$
and $s$ are distinct values in the set $\{\ident,1\}$. In this
case, $r+s$ is $0$, by the definition of addition, so the left
side of (R8) reduces to $0$; and the right side of~(R8) reduces to
$\ident;1+1;1$, which is also $0$.

To verify (R10),  observe first that $1;s=-s$ for any value of $s$
in $\ff A2$, by the definitions of relative multiplication and
complement (see \refTa{a2}). If $r$ is $0$, then
\begin{multline*}
r\ssm;-(r;s) +-s=0\ssm;-(0;s)+-s=0;-0+-s\\=0;0+-s=0+-s=
-s,\end{multline*}if $r$ is $\ident$, then
\begin{gather*} r\ssm;-(r;s)+-s=\ident\conv;-(\ident;s)+-s=\ident;-s+-s=-s\comma\\\intertext{and
if $r$ is $1$, then}
 r\ssm;-(r;s)+-s=1\conv;-(1;s)+-s=1;-(-s)+-s=1;s+-s=-s\per
\end{gather*}
   Thus, in all
three cases, the left side of (R10) reduces to $-s$, as desired.

A computational check using  the model searching program Mace4 has
shown that~$\ff A2$ is the unique independence model for (R2) of
cardinality $3$, and  there is no independence model for (R2) of
smaller cardinality.

\section{Independence of (R3)}\label{S:r3}

Consider any Boolean relation algebra with at least two elements.
Modify the definition of complement in this algebra by requiring
it to be the identity function, that is to say, by
requiring~$-r=r$ for all $r$.    In the resulting algebra~$\ff
A3$, axiom~(R3) fails, because
\[-(-r+s)+-(-r+-s)=r+s\comma\] by the definition of complement, and
$r+s$  is different from $r$ whenever the element~$s$ is not below
$r$ (as is the case if, say, $s$ is $1$ and $r$ is less than~$1$).
Axioms (R1),~(R2), and~(R4)--(R9) all hold in $\ff A3$, because
they do not involve the operation of complement. Also, (R10) holds
in $\ff A3$, because
\[r\ssm;-(r;s)+-s=r\cdot(r\cdot s) + s=s=-s\comma\] by the
definitions of the operations of converse, relative
multiplication, and complement.

A minimal independence model for (R3) is obtained by starting with
a Boolean relation algebra of cardinality $2$.

\section{Independence of (R4)}\label{S:r4}  To construct
an independence model for (R4),  start with a three-element
partial algebra $(G\smbcomma \scir\smbcomma{}\mo\smbcomma \ii)$
of the same similarity type as a group. \begin{table}[htb]
  \centering\begin{tabular}{|c|c|c|c|}\hline
   \  $\,\scir\,$ \ & \ $0$ \  & \ $1$ \  & \ $2$ \ \\ \hline
     $0$  &$0$ & $1$ & $2$ \\ \hline
     $1$ & $1$ & $0$ &\    \\ \hline
     $2$   & $2$ & \  & $0$ \\ \hline
  \end{tabular}\vspace{.1in}
  \caption{Table for the operation $\,\scir\,$.}\label{Ta:mckinsey2}
\end{table}   The universe $G$ of this partial algebra is the
set $\{0,1,2\}$, the binary partial operation $\scir$ is
determined by \refTa{mckinsey2}, the unary operation $\,{}\mo\,$
is  the identity function on $G$, and the distinguished constant
$\ii$ is~$ 0$. The values of $1\scir 2$ and $2\scir 1$ in the
operation table for $\,\scir\,$ are left undefined. Form the
complex algebra $\ff A4$ of this partial algebra in exactly the
same way as the complex algebras of groups are formed. The
operation of relative multiplication in $\ff A4$ is given by
\refTa{mckinsey1}. Converse is the identity function on the
universe of $\ff A4$, and $\{0\}$ is the identity element with
respect to the operation of relative multiplication.
\begin{table}[htb]
  \centering\begin{tabular}{|c|c|c|c|c|c|c|c|c|}\hline
     $\,;\,$& $\varnot$ & $\{0\}$ & $\{1\}$ & $\{2\}$ & $\{0,1\}$ & $\{0,2\}$ & $\{1,2\}$ & $\{0,1,2\}$ \\ \hline
     $\varnot$& $\varnot$ & $\varnot$ & $\varnot$ &$\varnot$ & $\varnot$ & $\varnot$ & $\varnot$ & $\varnot$ \\ \hline
    $\{0\}$ & $\varnot$ &$\{0\}$ & $\{1\}$ & $\{2\}$ & $\{0,1\}$ & $\{0,2\}$ & $\{1,2\}$ & $\{0,1,2\}$ \\ \hline
    $\{1\}$ & $\varnot$ & $\{1\}$ & $\{0\}$ & $\varnot$ & $\{0,1\}$ & $\{1\}$ & $\{0\}$ & $\{0,1\}$ \\ \hline
     $\{2\}$ & $\varnot$ & $\{2\}$ & $\varnot$ & $\{0\}$ & $\{2\}$ & $\{0,2\}$ & $\{0\}$ & $\{0,2\}$ \\ \hline
     $\{0,1\}$ & $\varnot$ & $\{0,1\}$ & $\{0,1\}$ & $\{2\}$ & $\{0,1\}$ & $\{0,1,2\}$ & $\{0,1,2\}$ & $\{0,1,2\}$ \\ \hline
     $\{0,2\}$& $\varnot$ & $\{0,2\}$ & $\{1\}$ & $\{0,2\}$ & $\{0,1,2\}$ & $\{0,2\}$ & $\{0,1,2\}$ & $\{0,1,2\}$ \\ \hline
     $\{1,2\}$ & $\varnot$ & $\{1,2\}$ & $\{0\}$ & $\{0\}$ & $\{0,1,2\}$ & $\{0,1,2\}$ & $\{0\}$ & $\{0,1,2\}$ \\ \hline
  $\{0,1,2\}$ & \ \ $\varnot$\ \ \  & $\{0,1,2\}$ & $\{0,1\}$ & $\{0,2\}$ & $\{0,1,2\}$ & $\{0,1,2\}$ & $\{0,1,2\}$ & $\{0,1,2\}$ \\ \hline
 \end{tabular}\vspace{.1in}
  \caption{Operation table for relative multiplication in $\ff A4$.}\label{Ta:mckinsey1}
\end{table}

  To see that
(R4) fails in $\ff A4$, take $r$ to be the atom $\{1\}$, and take
$s$ and $t$ to be the atom $\{2\}$,  to obtain
\begin{align*}
r;(s;t)&=\{1\};(\{2\};\{2\})=\{1\};\{0\}=\{1\},\\ \intertext{and}
(r;s);t&=(\{1\};\{2\});\{2\}=\varnot;\{2\}=\varnot\per
\end{align*}

The Boolean part of $\ff A4$ is, by definition, a Boolean algebra
of sets, so  (R1)--(R3) are certainly valid in $\ff A4$. The
operation of relative multiplication is commutative and
distributive over addition in $\f A$, because $\,\scir\,$ is a
commutative partial operation (see \refTa{mckinsey2}), and because
the very definition of the complex operation $\,;\,$ in terms of
$\,\scir\,$ (see \refS{models}) implies that it  distributes over
arbitrary sums. From these observations, together with the fact
that converse is the identity function on the universe of $\ff
A4$, and $\{0\}$ is an identity element with respect to operation
of relative multiplication, it follows that  (R5)--(R9) all hold
trivially in $\ff A4$.

It remains to show that (R10) is valid in $\ff A4$. We do this by
verifying condition~\refEq{r11} for atoms in its contrapositive
form (see the remarks at the end of \refS{asec1.1}). In the
present situation, this amounts to checking that
\begin{equation*}\tag{1}\label{Eq:r4.1}
s\le r;t\qquad\text{implies}\qquad t\le r;s
\end{equation*}
  for all atoms $r$, $t$,
and $s$. If $r$ is the identity element $\{0\}$, then \refEq{r4.1}
reduces to the triviality  that $s=t $ implies $t=s$. If $t$ is
the identity element, then the hypothesis of~\refEq{r4.1} reduces
to  $s=r$;  in this case $r;s=r;r$, which is always the identity
element when $r$ is an atom (see \refTa{mckinsey1}), so the
conclusion of \refEq{r4.1} holds. We may therefore assume that~$r$
and~$t$ are atoms distinct from the identity element.  If $r=t$,
then the hypothesis of \refEq{r4.1} is only satisfied if $s$ is
the identity element (see \refTa{mckinsey1}), and in this case the
conclusion of \refEq{r4.1} holds trivially. The only remaining
case is when~$r$ and $t$ are, in some order, the two subdiversity
atoms $\{1\}$ and $\{2\}$. In this case the relative product $r;t$
is the empty set (see \refTa{mckinsey1}), so the hypothesis of
\refEq{r4.1} is never satisfied, and therefore the implication in
\refEq{r4.1} is always true.

The algebra $\ff A4$ was discovered   by J.\,C.\,C.\,McKinsey some
time in the early 1940s.  A computational check using Mace4 has
shown that there is no independence model for (R4) of smaller
cardinality.  Roger Maddux has studied variants of relation
algebras in which only weakened versions of the associative law
hold. In particular, he has constructed numerous examples of
algebras in which (R4)  fails and the rest of Tarski's axioms
hold; see in particular Theorems 2.5(3),  2.5(4), 3.7,  and 3.10
in~\cite{ma82} and see also \cite{ma06}.

\section{Independence of (R5)}\label{S:r5}

 Consider any Boolean   algebra $(A\smbcomma
+\smbcomma -)$ with at least two elements.   Define relative
multiplication to be the binary operation on $A$ whose value on
any two arguments is always $0$, so that
\[r;s=0\] for all $r$ and $s$. Take converse to be the
identity function on $A$, and take $\ident$ to be any element in
$A$. In the resulting algebra $\ff A5$,
\[r;\ident=0\neq r\] whenever $r$ is a non-zero element, so (R5) fails. The
Boolean axioms (R1)--(R3) obviously hold in $\ff A5$, and the
associative law (R4) for relative multiplication holds because
both sides of (R4) reduce to $0$.  The same is true of (R7),
\[(r;s)\ssm=0\ssm=0=s\ssm;r\ssm\comma\] and (R8),
\[(r+s);t=0=0+0=r;t+s;t\per\] Axioms (R6) and (R9) hold trivially
because converse is the identity function, and Tarski's law (R10)
holds because \[r\ssm;-(r;s)+-s=0+-s=-s\per\]

There is another, rather trivial independence model for (R5) that
should be mentioned. Consider any relation algebra $\f B$ with at
least two elements. The identity element in $\f B$ is uniquely
determined in the sense that there is exactly one element in $\f
B$ for which (R5) holds (see Theorem 1.2 in
Chin-Tarski\,\cite{ct}). Take $\ff B5$ to be the algebra obtained
from $\f B$ by choosing $\ident$ to be any element different from
the identity element in~$\f B$ (for example, choose $\ident$ to be
the zero element in $\f B$). Axiom (R5)  fails in~$\ff B5$,
because $\ident$ is not the identity element in $\f B$. But
(R1)--(R4) and (R6)--(R10)  all hold in~$\ff B5$, because they
hold in $\f B$ and they do not explicitly mention $\ident$.

The independence model $\ff B5$ has one important defect. There is
a formulation  of~(R5) that is not equational, but rather
existential in form and   does not utilize a distinguished
constant; instead, it asserts the existence of a right-hand
identity element for relative multiplication (see, for example,
Chin-Tarski\,\cite{ct}).  This existential form of~(R5) is true in
$\ff B5$, so $\ff B5$ cannot be used to demonstrate the
independence of the existential form of (R5) from the remaining
 axioms.  On the other hand,~$\ff A5$ can still be used for this purpose.

A minimal independence model for (R5) may be obtained by using the
two-element Boolean algebra to construct  $\ff A5$.

\section{Independence of (R6)}\label{S:r6}  Let $\f A$ be any
relation algebra with at least two elements, and modify the
definition of converse by requiring
\[r\conv=0\] for all $r$. Obviously, (R6) fails in the resulting algebra
$\ff A6$, since  for any non-zero element $r$ we have
\[r\ssm\conv=0\neq r\per\]

It is equally clear (R1)--(R5) and (R8) are valid in   $\ff A6$,
because these axioms are valid in $\f A$ and do not contain any
occurrence of converse.   Also, (R7) is valid in $\ff A6$, because
\[(r;s)\ssm=0=0;0=s\ssm;r\ssm\comma\] by the definition of
converse, and the fact that relative multiplication by $0$ always
yields $0$ in the relation algebra $\f A$, and hence also in $\ff
A6$.  A similar argument shows that (R9) is valid in $\ff A6$,
because both sides of the axiom reduce to $0$. Finally, to verify
(R10), or equivalently, \refEq{r10}, in~$\ff A6$, observe that
\[r\ssm;-(r;s)=0;-(r;s)=0\le-s\comma\] by the definition of
converse, and the fact that relative multiplication by $0$ always
yields $0$ in $\f A$, and hence also  in $\ff A6$.

A minimal independence model for (R6) may be obtained by taking
$\f A$ to be the two-element Boolean  relation algebra.

\comment{
 Let $(A\smbcomma +\smbcomma -)$ be a four element Boolean
algebra with zero $0$, unit~$1$, and atoms $\ident$ and $\diver$.
Take $\,\conv\,$ to be the constant unary function on $A$ that
maps every element to $0$, take $\,;\,$ to be the commutative
binary operation on $A$ determined by \refTa{R6fails}, and put
\[\f A=(A\smbcomma +\smbcomma -\smbcomma ;\smbcomma
\conv\smbcomma\ident)\per\]

 Notice that \refTa{R6fails} is \begin{table}[ht]
\begin{center} \begin{tabular}{ c | c | c | c | c |} $\,;\,$ &
\,$0$\, & $\ident$ & $\diver$ & \,$1$\,\\ \hline $0 $ & $0$ & $0$
& $0$ & $0$\\ \hline $\ident$ & $0$ & $\ident$ & $\diver$ & $1$\\
\hline $\diver$ & $0$ & $\diver$ & $1$ & $1$\\ \hline $1$\, & $0$
& \,$1$\, & $1$ & $1$\\ \hline
\end{tabular} \end{center} \caption{Table for the operation $\,;\,$\per}\label{Ta:R6fails}\end{table}
just the relative multiplication table of the minimal set relation
algebra $\ff M3$ (see \refS{asec01.01}). Since the relation
algebraic axioms are true in $\ff M3$,  every axiom that does not
involve the computation of a converse must be true in $\f A$.  In
particular, (R1)--(R5) and (R8) are true in $\f A$.  Also, (R7)
and (R9) are true in $\f A$, because the two sides of each
equation  always evaluate to $0$. As regards (R10), since $r\ssm$
is $0$, by the definition of $\,\conv\,$, we have
\[r\ssm;-(r;s)=0;-(r;s)=0\per\]  Consequently, the two sides of
(R10) always evaluate to $-s$, so (R10) is true in $\f A$.
However, (R6) fails in $\f A$, because $1\ssm\conv$ is $0$, not
$1$.}

\section{Independence of (R7)}\label{S:r7}

Let $(A\smbcomma +\smbcomma -)$ be any Boolean algebra with at
least four elements, and take $\ident$ to be any element in $A$
that is different from  $0$ and $1$. For instance, $\ident $ might
be an atom. Define a binary operation $\,;\,$ on $A$ by modifying
slightly the definition of relative multiplication in the
independence model $\ff A5$ for (R5):
\begin{equation*}
  r;s=\begin{cases}
   r &\quad \text{if\quad $s=\ident$}\comma \\
   0 &\quad \text{if\quad $s\neq\ident$}\comma
 \end{cases}
\end{equation*} for all $r$ and $s$.  Take $\,\conv\,$
to be the identity function on $A$.

Axiom (R7) fails in the resulting algebra $\ff A7$  because
relative multiplication is not commutative.  In more detail, if
$r$ is an element different from $0$ and $\ident$, and if~$s$
is~$\ident$,  then
\[(r;s)\ssm=r;\ident=r\neq 0=\ident;r=s\ssm;r\ssm\per \]

To see that (R4) is valid in $\ff A7$, observe that if $t=\ident$,
then both sides of (R4) reduce to $r;s$, and if $t\neq\ident$,
then both sides of (R4) reduce to $0$, by the definition of
relative multiplication. The argument that (R8) holds is similar:
if $t=\ident$, then both sides of (R8) reduce to $r+s$, and if
$t\neq \ident$, then both sides reduce to $0$. Each of~(R5),~(R6),
and (R9) holds trivially   in $\ff A7$, by the definitions of
relative multiplication and converse. The verification of (R10),
in the form of \refEq{r10},  in~$\ff A7$ breaks into cases, and
the argument in each case is based on the definition of converse
and relative multiplication, but uses also laws of Boolean algebra
in the final step. If~$s\neq \ident$, then
\[r\ssm;-(r;s)=r;-0=r;1=0\le -s\per\]
If $s=\ident$, then either $r\neq \diver$, in which case
$-r\neq\ident$, and therefore
\[r\ssm;-(r;s)=r;-(r;\ident)=r;-r=0\le-s\comma\] or else
$r=\diver$, in which case $-r=\ident$, and therefore
\[r\ssm;-(r;s)=\diver;-(\diver;\ident)=\diver;-\diver=\diver;\ident=\diver=-\ident=-s\per\]

A computational check using   Mace4 has shown that there is no
independence model for (R7) of smaller cardinality.

\section{Independence of (R8)}\label{S:r8}
 Let $\f A$ be any symmetric, integral relation algebra, that is to say, any relation
 algebra with at least two elements in which
 \[r;s=0\qquad\text{implies}\qquad r=0\quad\text{or}\quad s=0\comma\] and in which converse is the identity
 function on $A$.   It is well known and easy to see that the operation of relative multiplication in such a relation algebra
 is commutative, and  for any
 non-zero element  $r$,
 \[r;1=1;r=1\] (see \cite{jt52}).  The independence model   $\ff A8$ is obtained from $\f A$ by changing
 the definition of relative multiplication in one instance, namely  when
 both arguments are $0$, and in this case putting
 \[0;0=1\per\]
It is not difficult to check that (R8) fails in $\ff A8$: just
take $r$ and $t$ to be~$0$, and $s$ to be $\ident$, and observe
that
\begin{gather*}
(r+s);t=(0+\ident);0=\ident;0=0, \\ \intertext{but}
r;t+s;t=0;0+\ident;0=1+0=1\per
\end{gather*}

  The axioms of relation algebra are
valid in $\f A$, by assumption.  Every instance of an axiom that
does not involve a computation of~$0;0$ yields the same result in
$\ff A8$ as it does in $\f A$, so it must hold in $\ff A8$. In
particular, (R1)--(R3), (R5), (R6), and~(R9) all hold in $\ff A8$.
Similarly, all instances of (R7) in which $r$ and~$s$ are not
both~$0$
    hold in $\f A$ and therefore in $\ff A8$; and when both $r$ and $s$ are~$0$, each
   side
   of~(R7) reduces to $1$. Thus, (R7) is valid in $\ff A8$.
     It remains to check the validity of (R4) and (R10).

Every instance  of (R4) in which at most one of~$r$,~$s$, and~$t$
is $0$ must hold in $\ff A8$, since no such instance can involve a
computation of $0;0$. (Here, the assumption that~$\f A$ is
integral plays a role.) If all three of these elements are~$0$,
then the computations
\begin{align*}
0;(0;0)=0;1=0\qquad&\text{and}\qquad (0;0);0=1;0=0\\
\intertext{show  that (R4) holds in $\ff A8$ in this case as well.
There remain the three cases when exactly two of the elements are
$0$. If $r$ and $s$ are both $0$, and $t$ is different from $0$,
then} r;(s;t)=0;0=1\qquad&\text{and}\qquad (r;s);t=1;t=1\\
\intertext{(the assumption that $\f A$ is integral justifies the
last step).  A similar argument applies when $s$ and $t$ are $0$,
and $r$ is different from~$0$. If $r$ and $t$ are $0$, and $s$ is
different from $0$, then} r;(s;t)=0;0=1\qquad&\text{and}\qquad
(r;s);t=0;0=1\per
\end{align*}
Conclusion: (R4) is  valid in $\ff A8$.

There is only one instance of (R10) that involves a computation
of~$0;0$, namely  when $r$ and $s$ are both $0$. In this case, and
in every other case in which~$s$ is $0$, we have $-s=1$, so the
two sides of (R10) evaluate to $1$.  Since the remaining instances
of (R10)  do not lead to a computation involving $0;0$,   they
automatically hold in~$\ff A8$. For example, suppose $r$ and $s$
are not both $0$, but $-(r;s)$ is $0$.  In this case,~$r;s$ must
be $1$, so $r$---and therefore also $r\ssm$---must be different
from $0$. Consequently, this instance of (R10) does not involve a
computation of $0;0$, and therefore it holds in $\ff A8$.

Take $\f A$ to be the two-element Boolean relation algebra   to
arrive at a minimal independence model for (R8).

\section{Independence of (R9)}

Let $\f C$ be the complex algebra  of the additive group of
integers modulo $3$ (see \refS{asec1.1}, and in particular
\refTa{cz3}). In order to avoid notational confusion,
write~$\,\scir\,$ and $\,{}\mo\,$ for the operations of relative
multiplication (composition of complexes) and converse (inversion
of complexes) in $\f C$, and write~$\,;\,$ and $\,\conv\,$ for the
corresponding operations in $\ff A9$.
 The independence model $\ff A9$ for
(R9) is obtained from $\f C$ by changing the definitions of
converse and relative multiplication slightly, while leaving the
remaining operations intact.  In fact, the table for converse
  in $\ff A9$ is obtained from the table for converse   in $\comz$
by changing the value of converse on the two singletons~$\{1\}$
and $\{2\}$, while leaving its value on the remaining elements
unchanged. In $\comz$, converse interchanges these two singletons,
whereas in $\ff A9$ converse is defined to map each of these
singletons to itself. Put somewhat differently, converse in~$\ff
A9$ maps every element to itself, with the exception of the two
doubletons~$\{0,1\}$ and~$\{0,2\}$, which it interchanges. As a
result,  (R9) must fail in $\ff A9$. Indeed, if $r$ and $s$ are
taken to be $\{0\}$ and $\{2\}$ respectively, then
\begin{equation*}
(r+s)\ssm=(\{0\}+\{2\})\ssm=\{0,2\}\ssm=\{0,1\}\neq\{0\}+\{2\}=r\ssm+s\ssm\per
\end{equation*}

Unfortunately, this change in the definition of converse causes
other axioms to fail, for example (R7).  In order to avoid this
undesired side effect, the operation of relative multiplication
must also be altered in the passage from $\comz$ to $\ff A9$.
Specifically, it is altered in the six cases that involve relative
multiplication of one of the two singletons $\{1\}$ and $\{2\}$ on
the left with one of the three doubletons on the right.  If $r$ is
any singleton, and $s$ any doubleton, then the relative product
$r;s$ in $\ff A9$ is defined to coincide with the relative product
of $r\mo$ and $s$ in $\f C$, in symbols
\[r;s=r\mo\scir s\per\]
This has the effect of interchanging the relevant parts of the
rows for $\{1\}$ and $\{2\}$ in the operation table for relative
multiplication in $\f C$, but   leaving the row for $\{0\}$
unchanged (see Tables~\ref{Ta:cz3} and \ref{Ta:mmr9}).
\begin{table}[htb]
  \centering\begin{tabular}{|c|c|c|c|}\hline
     $\,\scir\,$& $\{0,1\}$ & $\{0,2\}$ & $\{1,2\}$ \\ \hline
       $\{0\}$ & $\{0,1\}$ & $\{0,2\}$ & $\{1,2\}$\\ \hline
        $\{1\}$ & $\{1,2\}$ & $\{0,1\}$ & $\{0,2\}$
  \\ \hline $\{2\}$ &
  $\{0,2\}$ & $\{1,2\}$ & $\{0,1\}$  \\ \hline
 \end{tabular}\qquad\qquad
  \begin{tabular}{|c|c|c|c|}\hline
     $\,;\,$& $\{0,1\}$ & $\{0,2\}$ & $\{1,2\}$ \\ \hline
       $\{0\}$ & $\{0,1\}$ & $\{0,2\}$ & $\{1,2\}$\\ \hline
        $\{1\}$ & $\{0,2\}$ & $\{1,2\}$ & $\{0,1\}$
  \\ \hline $\{2\}$ &
  $\{1,2\}$ & $\{0,1\}$ & $\{0,2\}$  \\ \hline
 \end{tabular}\vspace{.1in}
  \caption{Comparison of the  differences in the relative multiplication tables for $\comz$ and  $\ff A9$.}\label{Ta:mmr9}
\end{table}
In particular, relative multiplication by $\{0\}$  in $\ff A9$
yields the same result as in $\f C$.

The Boolean axioms (R1)--(R3) obviously hold in $\ff A9$, because
the Boolean part of $\ff A9$ coincides with the Boolean part of
$\f C$. Similarly, the identity law (R5) and the first involution
law (R6) hold trivially in $\ff A9$.

For the verification of the associative law (R4), observe that
most instances of this axiom yield the same result in $\ff A9$ as
in $\comz$.  Since $\comz$ is a relation algebra, these instances
must hold in $\f C$ and therefore also in    $\ff A9$. This
includes the following cases. (i) At least one of the elements
$r$, $s$, and $t$ is empty; in this case, both sides of (R4)
reduce to the empty set. (ii) All three elements are singletons of
group elements, say \[r=\{f\},\qquad s=\{g\},\qquad
t=\{h\}\semicolon\] in this case, both sides of (R4) reduce to the
singleton  $\{f\scir g\scir h\}$. (iii) At least two of the three
elements have cardinality at least two, and the third is not
empty; in this case, both sides of (R4) reduce to the unit
$\{0,1,2\}$. (iv) One of the elements is the unit, and the other
two are non-empty; in this case,   both sides of (R4) again reduce
to the unit. (v) Both $s$ and $t$ are singletons; in this case,
all relative products involved are  computed  the same way in $\ff
A9$ as   in $\f C$.

There remain two cases to consider. If $r$ and $s$ are singletons,
and $t$ a doubleton, then
\begin{multline*}
 r;(s;t)=r\mo\scir(s\mo\scir
t)=(r\mo\scir s\mo)\scir t\\=(s\scir r)\mo\scir t=(r\scir
s)\mo\scir t=(r;s);t\per
\end{multline*} The first and last equalities follow  from the definition of
$\,;\,$ and the assumption that $r$ and $s$ are singletons, and
$t$ a doubleton.  Notice in this connection that the relative
product of two singletons is always a singleton, and the relative
product of a singleton with a doubleton is always a doubleton,
both in $\ff A9$ and in $\f C$. The second and third equalities
follow from the validity of (R4) and (R7) in $\f C$. The fourth
equality uses the fact that the operation of relative
multiplication in $\f C$ is commutative (because the group
underlying~$\f C$ is commutative). If $r$ and $t$ are singletons,
and $s$ a doubleton, then
\[r;(s;t)=r\mo\scir(s\scir t)=(r\mo\scir s)\scir t=(r;s);t\comma\]
by the definition of relative multiplication in $\ff A9$, the
assumptions on the three elements, and the validity of (R4) in $\f
C$.

Turn now to the task of verifying the second involution law (R7)
in $\ff A9$. As in the case of (R4), most instances of (R7) yield
the same result in $\ff A9$ as in $\f C$, and are therefore
automatically valid in $\ff A9$. This includes the case  when at
least one of the elements $r$ and $s$ is empty,  in which case
both sides of (R7) reduce to the empty set; the case when one of
the elements is non-empty and the other is the unit $\{0,1,2\}$,
in which case both sides of (R7) reduce to the unit;  and the case
when both $r$ and~$s$ have at least two elements, in which case
both sides of (R7) again reduce to the unit. There remain three
cases to consider. If $r$ and $s$ are both singletons, then
\[(r;s)\ssm=r;s=s;r=s\ssm;r\ssm\per\] The first and last equalities use the fact that $r;s$ is  a singleton, and    converse
 is the identity function on singletons in $\ff A9$.  The
second equality follows from the fact that relative multiplication
in $\ff A9$ is commutative on singletons. If $r$ is a singleton,
and $s$ a doubleton, then
\[(r;s)\ssm=(r\mo\scir s)\ssm=(r\mo\scir s)\mo=s\mo\scir  (r\mo)\mo=s\mo\scir r=s\ssm;r=s\ssm;r\ssm\per\]
The first   equality follows from the definition  of relative
multiplication in $\ff A9$, and the assumption that $r$ is a
singleton and $s$ a doubleton.  The second equality uses
 the fact that the operations of
converse in $\f C$ and $\ff A9$ coincide on doubletons,
and~$r\mo\scir s$ must be a doubleton (since  $r$ is a singleton
and $s$ a doubleton). The third and fourth equalities use the
validity of~(R7) and (R6) in $\f C$. The fifth equality uses the
fact that the operations of converse in $\f C$ and $\ff A9$
coincide on doubletons, and so do the operations of relative
multiplication when the right-hand argument is a singleton.  The
sixth equality uses the fact that converse on singletons is the
identity function in $\ff A9$, and $r$ is assumed to be a
singleton. Finally, if $r$ is a doubleton, and $s$ a singleton,
then
\[(r;s)\ssm=(r\scir s)\mo=s\mo\scir r\mo=s; r\ssm=s\ssm;r\ssm\per\]
The first and third equalities use the definitions of relative
multiplication and converse in $\ff A9$, and the assumptions on
$r$ and $s$; the second equality uses the validity of~(R7) in $\f
C$; and the last equality uses the fact that converse is the
identity function on singletons in $\ff A9$.

Next, we verify the distributive law (R8) in $\ff A9$.  As usual,
most
 instances of this axiom yield the same result in $\ff A9$ as
in $\f C$, and are therefore valid in $\ff A9$.  This includes all
cases when $t$ is not a doubleton.  It also includes  the case
when $t$ is a doubleton and at least one of $r$ and $s$ has at
least two elements (in which case, both sides of (R8) reduce to
the unit, because in the relative product in $\ff A9$ and in $\f
C$  of two elements with at least two elements is always the
unit---see \refTa{cz3}). The case when $t$ is a doubleton, and at
least one of $r$ and $s$ is the empty set is trivial; for example,
if $r$ is the empty set, then
\[(r+s);t=s;t\qquad\text{and}\qquad r;t+s;t=\varnot;t+s;t=\varnot +s;t=s;t\per\]
Similarly, the case when $r=s$ is trivial.  There remains the case
when $t$ is a doubleton, and $r$ and $s$ are distinct singletons.
In this case, $r+s$ is a doubleton, so $(r+s);t$ is the unit. As
$r;t$ and $s;t$ are distinct doubletons, the sum $r;t+s;t$ is also
the unit. Thus, (R8) is valid in $\ff A9$ in this case as well.

It remains to verify Tarski's law (R10), or equivalently,
\refEq{r10}, in $\ff A9$. The instances of \refEq{r10} in which
$r$ is the empty set or has at least two elements yield the same
result in $\ff A9$ as in $\f C$, and are therefore valid in $\ff
A9$. The same is true of those instances of \refEq{r10} in which
$r$ is a singleton, and $s$ is either the empty set or the unit.
There remain two cases to consider. If $r$ and $s$ are both
singletons, then~$-(r;s)$ in $\ff A9$ coincides with $-(r\scir s)$
in $\f C$ and  is therefore a doubleton.  It follows that
\[r\ssm;-(r;s)=r;-(r;s)=r\mo\scir-(r\scir s)\le -s\per\] The first
equality uses the fact that converse is the identity operation on
singletons in $\ff A9$, the second uses the definition  of
relative multiplication in $\ff A9$ and the assumptions on $r$ and
$s$, and the last uses the validity of \refEq{r10} in $\f C$.  If
$r$ is a singleton, and $s$ a doubleton, then $r;s$ is also a
doubleton, so $-(r;s)$ is a singleton.  Consequently,
\[r\ssm;-(r;s)=r;-(r;s)=r\scir -(r\mo\scir s)=(r\mo)\mo \scir -(r\mo\scir s)\le-s\per\] The
first equality uses the fact that converse is the identity
function on singletons in $\ff A9$,  while the second equality
uses the definition of relative multiplication in $\ff A9$, the
assumptions on $r$ and $s$,  and the observations preceding the
calculation. The third equality uses the validity of (R6) in $\f
C$, and the final inequality follows from the validity of
\refEq{r10} in $\f C$ (with $r$ replaced by $r\mo$).

A computational check using Mace4 has shown that $\ff A9$ is the
unique independence model for (R9) of cardinality $8$, and that
there is no  smaller independence model for this axiom.

It is interesting to note that the left-hand distributive law
\refEq{r8} for relative multiplication  fails in $\ff A9$. Indeed,
take $r$, $s$, and $t$ to be $\{1\}$, $\{0\}$, and~$\{2\}$
respectively to obtain
\begin{gather*}
  r;(s+t)=\{1\};(\{0\}+\{2\})=\{1\};\{0,2\}=\{1,2\}\comma\\ \intertext{by the definition of $\,;\,$ (see \refTa{mmr9}), but}
  r;s+r;t=\{1\};\{0\}+\{1\};\{2\}=\{1\}\scir\{0\}+\{1\}\scir\{2\}=\{1\}+\{0\}=\{0,1\}\per
\end{gather*}

\section{Independence of (R10)}
Consider any Boolean   algebra $(A\smbcomma +\smbcomma -)$ with at
least two elements.    Take relative multiplication to be the
Boolean operation of addition, take converse to be the identity
function on $A$, and take the identity element to be the Boolean
zero. To see that (R10) fails in the resulting algebra $\ff
A{10}$, take  $r$ and $s$ to be $1$, and observe that
\[r\ssm;-(r;s)=r+-(r+s)=1+-(1+1)=1\not\le 0=-s\comma\]
by the definitions of relative multiplication and converse, and
the choice of $r$ and~$s$. On the other hand, the Boolean axioms
(R1)--(R3) hold automatically in $\ff A{10}$, and (R4)--(R9)
reduce to Boolean laws, so they, too, are valid in $\ff A{10}$. To
give two concrete examples, consider (R7) and (R8). We have
\begin{gather*}
  (r;s)\ssm=r+s=s+r=s\ssm;r\ssm\comma\\
  \intertext{and}(r+s);t=(r+s)+t= (r+t)+(s+t)=r;t + s;t\comma
\end{gather*}
for all elements $r$, $s$, and $t$, by the definition of relative
multiplication and converse.

Start with a two-element Boolean algebra in the preceding
construction to arrive at an independence model for (R10) of
minimal cardinality.

There is  another interesting  and rather different independence
model for (R10) that is worthwhile discussing. Start with the set
relation algebra $\ff M3$ (see \refTa{minsquare}), and modify the
operation of relative multiplication in two ways: require relative
multiplication by the diversity element to always yield the
diversity element,
\begin{gather*}
  r;\diver=\diver;r=\diver\\
  \intertext{for all elements $r$, and require}
  0;1=1;0=\diver
\end{gather*}
(see \refTa{b10}).

\begin{table}[htb]
\begin{center} \begin{tabular}{ l | c | c | c | c |} $;$ &
\,$0$\, & $\ident$ & $\diver$ & \,$1$\,\\ \hline $0 $ & $0$ & $0$
& $\diver$ & $\diver$\\ \hline $\ident$ & $0$ & $\ident$ &
$\diver$ & $1$\\ \hline $\diver$ & $\diver$ & $\diver$ & $\diver $
& $\diver $\\ \hline $1$\, & $\diver$ & \,$1$\, & $\diver $ &
$1$\\ \hline
\end{tabular}
\end{center} \vspace{.1in}\caption{Relative multiplication table for $\ff B{10}$\per}\label{Ta:b10}
\end{table}

 To see that (R10), or equivalently, \refEq{r10}, fails in the resulting algebra  $\ff B{10}$, take~$r$ and $s$ to be
 $\diver$ to  obtain
\begin{equation*}
r\ssm;-(r;s)=\diver\ssm;-(\diver;\diver)=\diver;-\diver
=\diver;\ident=\diver\not\le \ident=-s\comma
\end{equation*} by the definition of relative multiplication and
converse, and the choice of $r$ and $s$.

The Boolean part of $\ff B{10}$ coincides with the Boolean part of
$\ff M3$, the operation of relative multiplication is commutative,
$\ident$ remains the identity element for relative multiplication,
and
 converse is the identity function, so
(R1)--(R3),~(R5),~(R6),~(R7), and (R9) are all easily seen to be
valid in $\ff B{10}$. It remains to verify (R4) and~(R8). The
relative multiplication table for $\ff B{10}$ differs from that of
$\ff M3$ (compare Tables~\ref{Ta:minsquare} and \ref{Ta:b10})
 in the seven entries
\begin{equation*}
0;\diver,\quad \diver;0,\quad 0;1,\quad
1;0,\quad\diver;\diver,\quad\diver;1,\quad\text{and} \quad
1;\diver\comma
\end{equation*} which all have the value $\diver$ in $\ff B{10}$.
Consequently, every instance of (R4) and~(R8) which does not
involve the computation of one of these products  is automatically
valid in $\ff B{10}$, because it is valid in $\ff M3$.  Notice
also that the relative product of two elements in $\ff B{10}$ is
never $\diver$ unless one of the elements is $\diver$, or else one
of the elements is $0$ and the other is $1$ (see \refTa{b10}).
Consequently, if $r$ and $s$ are both different from~$\ident$,
then $r;s$ is different from $\diver$ if and only if $r$ and $s$
are either both~$0$ or both $1$.

The validity in $\ff B{10}$ of the associative law (R4)  follows
readily from the preceding observations. Any instance of (R4) in
which at least one of the three elements~$r$,~$s$, and $t$ is
$\ident$  holds trivially in $\ff B{10}$, because both sides of
(R4) reduce to the relative product of the other two elements.
Assume now that none of these three elements is~$\ident$.  In this
case, none of the relative products involved in (R4) can have the
value~$\ident$ (see \refTa{b10}), so the left side of (R4) is
different from $\diver$ if and only if~$r$ and $s;t$ are either
both $0$ or both $1$, by the observations at the end of the
preceding paragraph.   In the case under consideration, $s;t$ can
only be $0$ or $1$ if $s$ and $t$ are both $0$ or both $1$
respectively (see \refTa{b10}). Consequently, the left side of
(R4) is different from $\diver$ if and only if $r$, $s$, and $t$
are all $0$ or all $1$.  A similar remark applies to the right
side of (R4). Thus, either both sides of (R4) evaluate to
$\diver$, in which case (R4) holds in $\ff B{10}$, or else $r$,
$s$, and $t$ all have the same value---either $0$ or $1$---and in
this case (R4) holds in $\ff B{10}$, because it holds in $\ff M3$.

Turn finally  to  the verification of (R8) in $\ff B{10}$. It is
to be shown that both sides of this axiom evaluate to the same
element in $\ff B{10}$. If $t$ is~$\diver$, then both sides
evaluate to $\diver$, and if $t$ is $\ident$, then both sides
evaluate to $r+s$. Consider next the case when~$t$ is $0$. If at
least one of $r$ and $s$ is $\diver$ or $1$, then both sides
of~(R8) evaluate to $\diver$. For example, if $r$ is $\diver$,
then
\[(r+s);t=(\diver +s);0=\diver\quad\text{and}\quad r;t+s;t=\diver;0+s;0=\diver\comma\]
since $s;0$ is at any rate below $\diver$.  A similar argument
applies when~$r$ is~$1$. The only other possibility in the case
under consideration is that~$r$ and $s$ both assume values in the
set $\{0,\ident\}$, and in this case the computation of each side
of (R8) yields the same result in $\ff B{10}$ as it does in $\ff
M3$.

There remains the case when $t$ is $1$. Keep in mind that~$0;1$
and~$\diver;1$ are both~$\diver$, and~$\ident;1$ and~$1;1$ are
both $1$ (see \refTa{b10}). The sum~$r+s$ assumes one of  four
values: $0$, $\ident$, $\diver$, or $1$. If this value is
$\ident$, then at least one of $r$ and $s$ must be~$\ident$
(since~$\ident$ is an atom), so both sides of (R8) evaluate to
$1$. If the value of the sum is~$1$, then at least one of $r$ and
$s$ is either $\ident$ or $1$, so both sides of (R8) again
evaluate to $1$. If the value of the sum is  $0$ or $\diver$, then
neither $r$ nor $s$ can be $\ident$ or $1$, and therefore both
sides of~(R8) must
 evaluate to $\diver$.
 This completes the
verification of~(R8) in $\ff B{10}$.

\section{A variant of Tarski's axiom system}\label{S:variant}

  Somewhat surprisingly, it turns out that  by modifying slightly one of the  axioms in Tarski's  system, namely (R8), another of the
   axioms, namely (R7), becomes redundant.  We begin
with some lemmas that will  be needed again later.
\begin{lemma}\label{L:lm1}  Under the assumption of \textnormal{(R1)--(R3)}\comma  axiom \textnormal{(R8)} implies the left-hand monotony law
for relative multiplication\comma and  axiom
\textnormal{\refEq{r8}} implies the right-hand hand monotony law
for relative multiplication\per
\end{lemma}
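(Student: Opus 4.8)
The plan is to deduce each implication directly from the definition of the partial order $\,\le\,$ together with the corresponding one-sided distributive law, using only a single substitution of Boolean equals for equals. Recall that, once (R1)--(R3) guarantee that the Boolean part of the algebra is a Boolean algebra, the relation $\,\le\,$ is characterized by: $r\le s$ if and only if $r+s=s$.

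First I would establish the left-hand monotony law. Assume $r\le s$, so that $r+s=s$, and let $t$ be arbitrary. Instantiating (R8) at these elements gives
\[
r;t+s;t=(r+s);t=s;t,
\]
where the last step uses $r+s=s$. But the equation $r;t+s;t=s;t$ is, by the definition of $\,\le\,$, exactly the assertion $r;t\le s;t$, as required.

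Next I would establish the right-hand monotony law, arguing symmetrically with \refEq{r8} in place of (R8). Assume again $r\le s$, i.e.\ $r+s=s$, and let $t$ be arbitrary. Instantiating \refEq{r8} gives
\[
t;r+t;s=t;(r+s)=t;s,
\]
and the equation $t;r+t;s=t;s$ says precisely that $t;r\le t;s$.

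I do not expect any real obstacle here: the only ingredients are the definition of $\,\le\,$ in terms of $\,+\,$ and one application of the appropriate distributive law; axioms (R1)--(R3) enter only insofar as they make the order relation meaningful, and are not otherwise invoked. (In particular, the argument shows that (R8) alone, together with the definition of $\,\le\,$, yields the left-hand monotony law, and likewise \refEq{r8} alone yields the right-hand monotony law.)
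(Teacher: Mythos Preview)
Your argument is correct and is essentially the same as the paper's: both deduce $s;t=(r+s);t=r;t+s;t$ from $r+s=s$ via (R8), read off $r;t\le s;t$ from the definition of $\le$, and then invoke the symmetric argument with \refEq{r8} for the right-hand monotony law.
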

\begin{proof}
 If  $r\le s$, then $s=r+s$, by the definition of $\,\le\,$, and
 therefore
 \[s;t=(r+s);t=r;t+s;t\comma\] by (R8).  Consequently, $r;t\le
 s;t$, by the definition of $\,\le\,$.  This proves that (R8) implies the
 left-hand monotony law for relative multiplication.  A similar
 argument shows
 that \refEq{r8} implies the right-hand monotony law for relative
 multiplication.\end{proof}
\begin{lemma}\label{L:lm2}
   Under the assumption of \textnormal{(R1)--(R3)}\comma \textnormal{(R6)}, and
   \textnormal{\refEq{r8}}\comma axiom \textnormal{(R10)} is equivalent to  the
   law
\begin{equation*}\tag{R11$'$}\label{Eq:r12}
(r;s)\cdot t=0\qquad \text{if and only if}\qquad (r\ssm;t)\cdot
 s=0\per
\end{equation*}
\end{lemma}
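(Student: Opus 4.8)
The plan is to establish the chain of equivalences $\refEq{r10}\Longleftrightarrow\refEq{r11}\Longleftrightarrow\refEq{r12}$ under the stated hypotheses, and then to invoke the observation, recorded in \refS{asec1.1}, that (R1)--(R3) already make (R10) equivalent to its inequality form \refEq{r10}. Combining these yields the asserted equivalence of (R10) and \refEq{r12}.

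First I would prove $\refEq{r10}\Rightarrow\refEq{r11}$. Suppose $(r;s)\cdot t=0$; by the definitions of $\,\cdot\,$ and $\,\le\,$ this says exactly that $t\le -(r;s)$. By \refL{lm1}, axiom \refEq{r8} supplies the right-hand monotony law for relative multiplication, so $r\ssm;t\le r\ssm;-(r;s)$; chaining this with \refEq{r10} gives $r\ssm;t\le -s$, that is, $(r\ssm;t)\cdot s=0$, which is \refEq{r11}. The converse $\refEq{r11}\Rightarrow\refEq{r10}$ is even easier: specialize \refEq{r11} by taking $t:=-(r;s)$; its hypothesis $(r;s)\cdot(-(r;s))=0$ holds by Boolean algebra, so the conclusion reads $(r\ssm;-(r;s))\cdot s=0$, which is precisely the inequality \refEq{r10}.

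It remains to see $\refEq{r11}\Longleftrightarrow\refEq{r12}$. Since \refEq{r12} is a biconditional it trivially implies the one-directional \refEq{r11}. Conversely, the left-to-right half of \refEq{r12} is just \refEq{r11} itself, and for the right-to-left half I would apply \refEq{r11} with $r$, $s$, $t$ replaced by $r\ssm$, $t$, $s$ respectively: from $(r\ssm;t)\cdot s=0$ one gets $(r\ssm\ssm;s)\cdot t=0$, and $r\ssm\ssm=r$ by (R6), so $(r;s)\cdot t=0$. Thus \refEq{r11} together with (R6) yields \refEq{r12}.

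I do not anticipate a genuine obstacle; the argument is short and elementary. The two points worth flagging are that the implication $\refEq{r10}\Rightarrow\refEq{r11}$ is exactly where the left-hand distributive law \refEq{r8} is needed --- it is what supplies the right-hand monotony law via \refL{lm1}, whereas the original right-hand distributive law (R8) would not --- and that (R6) is used solely to upgrade the implication \refEq{r11} to the full biconditional \refEq{r12}.
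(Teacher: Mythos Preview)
Your proposal is correct and follows essentially the same approach as the paper's own proof: the paper likewise reduces to \refEq{r10}, proves $\refEq{r10}\Rightarrow\refEq{r11}$ via right-hand monotony from \refEq{r8} and \refL{lm1}, obtains the reverse half of \refEq{r12} by the same substitution $r\mapsto r\ssm$, $s\mapsto t$, $t\mapsto s$ together with (R6), and recovers \refEq{r10} from \refEq{r12} by specializing $t:=-(r;s)$. The only difference is organizational---you explicitly route through the one-directional \refEq{r11}, whereas the paper folds that step directly into the two halves of \refEq{r12}.
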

\begin{proof}As was mentioned in \refS{asec1.1}, on the basis of (R1)--(R3), axiom (R10) is equivalent to
\refEq{r10}, so it suffices to prove that \refEq{r10} is
equivalent to \refEq{r12}.  Assume first that \refEq{r10} holds.
  If $(r;s)\cdot t=0$, then
    $t\le -(r;s)$, by Boolean algebra (here we are using (R1)--(R3)), and
therefore
\[r\ssm;t\le r\ssm;-(r;s)\le -s\comma\] by  the right-hand
monotony law for relative multiplication (which is valid under the
assumption of \refEq{r8}, by  \refL{lm1}) and~(R10). Consequently,
$(r\ssm;t)\cdot s=0$, by Boolean algebra. This argument
establishes the implication from left to right in~\refEq{r12}.

To establish the reverse implication, assume~$(r\ssm;t)\cdot
 s=0$\comma and use the results of the previous paragraph (with $r\ssm$, $t$, and $s$ in place
 of $r$, $s$, and $t$ respectively) to obtain $(r\ssm\conv;s)\cdot t=0$.  Apply (R6) to conclude that $(r;s)\cdot
 t=0$.

 Assume now that \refEq{r12}  holds.  Take $t$ to be $-(r;s)$ and observe that the left side of \refEq{r12} obviously holds,
 by Boolean algebra. Consequently, the right side must hold, that is to say,
 \[[r\ssm;-(r;s)]\cdot s=0\per\] This equation is clearly equivalent to
 \refEq{r10}, by Boolean algebra.\end{proof}

As is clear from the proof of \refL{lm2}, under the assumption of
(R1)--(R3) and~\refEq{r8}, axiom (R10) is equivalent to the
implication from left to right in \refEq{r12}, that is to say,
(R10) is equivalent to \refEq{r11} (see \refS{asec1.1}).

\begin{lemma}\label{L:lm3.1}  Under the assumption of \textnormal{(R6), (R7)} and
\textnormal{(R9)}\comma  axiom \textnormal{(R8)} is equivalent to \textnormal{\refEq{r8}}\per
\end{lemma}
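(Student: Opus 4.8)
The plan is to exploit the fact that converse is an involutive anti-automorphism of the Peircean reduct, so that it interchanges the two one-sided distributive laws; (R6), (R7), and (R9) are precisely the axioms needed to perform this transfer. To deduce \refEq{r8} from (R8), I would start with the term $r;(s+t)$, rewrite it as its double converse by (R6), and then unpack the inner converse: by (R7), $(r;(s+t))\conv = (s+t)\conv;r\conv$, and by (R9), $(s+t)\conv = s\conv + t\conv$, so that $(r;(s+t))\conv = (s\conv + t\conv);r\conv$. At this point (R8) applies to the right-hand side, giving $s\conv;r\conv + t\conv;r\conv$.

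The second half of the argument reverses this process. Applying (R7) in the other direction turns $s\conv;r\conv$ into $(r;s)\conv$ and $t\conv;r\conv$ into $(r;t)\conv$, so the expression becomes $(r;s)\conv + (r;t)\conv$, and (R9) read from right to left collapses this to $\bigl((r;s)+(r;t)\bigr)\conv$. Taking converse once more and invoking (R6) yields $r;(s+t) = (r;s)+(r;t)$, which is \refEq{r8}. The whole computation is best presented as a single chain of equalities, each link justified by exactly one of (R6), (R7), (R9), or (R8).

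The converse implication is established by the very same chain, with (R8) replaced by \refEq{r8} at the one place where a distributive law is invoked; this shows that \refEq{r8} implies (R8). Since the three background axioms are equations that may be read in either direction, nothing further is required, and the two directions together give the claimed equivalence. There is no substantial obstacle here: the only points requiring care are keeping track of the order of the two factors each time (R7) is applied, since it reverses relative products, and making sure that (R6) and (R9) are used in the appropriate direction at each stage.
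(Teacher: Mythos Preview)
Your proposal is correct and follows essentially the same approach as the paper: both proofs use (R6), (R7), and (R9) to exhibit converse as an involutive anti-automorphism that transforms one distributive law into the other. The paper organizes the computation by substituting $r\ssm,s\ssm,t\ssm$ into (R8) and then simplifying the converse of each side, whereas you compute the double converse of $r;(s+t)$ directly, but the chain of equalities is the same.
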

\begin{proof}  The derivation of \refEq{r8} from (R8) is contained in the proof of Theorem 1.21 in
Chin-Tarski\,\cite{ct}. For the convenience of
the reader, here are the details of the argument.
 Observe that
\begin{equation*}\tag{1}\label{Eq:thm1.2}
[(r\ssm+s\ssm);t\ssm]\ssm=t\ssm\conv;(r\ssm+s\ssm)\ssm=t\ssm\conv;(r\ssm\conv+s\ssm\conv)=t;(r+s)\comma
\end{equation*}
by (R7), (R9), and (R6), and
\begin{multline*}\tag{2}\label{Eq:thm1.3}
[(r\ssm;t\ssm)+(s\ssm;t\ssm)]\ssm=(r\ssm;t\ssm)\ssm+(s\ssm;t\ssm)\ssm\\=t\ssm\conv;r\ssm\conv+t\ssm\conv
s\ssm\conv=t;r+t;s\comma
\end{multline*} by (R9), (R7), and (R6). Axiom (R8)   (with~$r$,~$s$, and~$t$ replaced by $r\ssm$,
$s\ssm$, and $t\ssm$ respectively) ensures that
\begin{equation*}
(r\ssm+s\ssm);t\ssm=r\ssm;t\ssm +s\ssm;t\ssm\per
\end{equation*} Form the converse of both sides of this last equation,
and use \refEq{thm1.2} and \refEq{thm1.3} to arrive at \refEq{r8}.

A dual argument leads to an analogous derivation of (R8) from \refEq{r8}.
\end{proof}

The next lemma occurs as part of Theorem 313 in \cite{ma06}.  We prove it here for the convenience of the reader.

\begin{lemma}\label{L:lm3.2}  Under the assumption of \textnormal{(R1)--(R3)}\comma
axioms  \textnormal{(R4)} and \textnormal{(R5)}\comma together
with \textnormal{\refEq{r12}}\comma
 imply \textnormal{(R7)}\per
\end{lemma}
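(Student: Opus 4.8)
The plan is to establish the second involution law $(r;s)\conv = s\conv;r\conv$ by a Boolean-algebraic argument, using \refEq{r12} to convert the equation into a statement about when products meet a given element in $0$, and then invoking the associative law (R4) and the identity law (R5) to manipulate the resulting triple products. The key observation is that in a Boolean algebra two elements $a$ and $b$ are equal if and only if for every $t$ we have $a\cdot t = 0 \iff b\cdot t = 0$; this reduces proving (R7) to showing
\[
(r;s)\conv\cdot t = 0 \quad\Longleftrightarrow\quad (s\conv;r\conv)\cdot t = 0
\]
for all $r$, $s$, $t$.

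First I would rewrite the left-hand condition. Applying \refEq{r12} is not immediately possible to $(r;s)\conv \cdot t$, so instead I would work with the equivalent formulation: since both sides of the desired identity, when hit with $\conv$, should behave symmetrically, a cleaner route is to show $\bigl((r;s)\conv;u\bigr)\cdot v = 0 \iff \bigl((s\conv;r\conv)\conv;u\bigr)\cdot v = 0$ for suitable $u,v$ — but this is circular. The better approach: I would show directly that for all $t$,
\[
\bigl((r;s);t\bigr)\cdot u = 0 \quad\text{relates to}\quad \bigl((s\conv;r\conv);t'\bigr)\cdot u'
\]
through two applications of \refEq{r12} together with (R4). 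Concretely, starting from $(r;s)\cdot t = 0$, one application of \refEq{r12} (in the form with $r;s$ as a single left factor) is not what is wanted either; rather, the trick is to use \refEq{r12} twice: once to pass $r$ across, once to pass $s$ across, with (R4) regluing the associativity in between, and (R5) handling the base case where one factor is $\ident$.

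The concrete chain I would aim for: given an arbitrary $t$, I claim $(r;s);t$ meets $w$ in $0$ iff $(s\conv;r\conv);w$ meets $t$ in $0$ — no wait, the honest skeleton is: apply \refEq{r12} to get $(r;(s;t_0))\cdot w = 0 \iff (r\conv;w)\cdot(s;t_0) = 0$, then apply it again to the latter, obtaining $(s\conv;(r\conv;w))\cdot t_0 = 0$, and then reassociate by (R4) to read this as $((s\conv;r\conv);w)\cdot t_0 = 0$; comparing with the first application of \refEq{r12} to $(r;s)$ directly yields the needed equivalence, after specializing $t_0 = \ident$ and using (R5) so that $s;\ident = s$ and $r;s$ reappears on the nose. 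Matching up the two derived biconditionals shows $(r;s)\conv \cdot w = 0 \iff (s\conv;r\conv)\cdot w = 0$ for all $w$, hence (R7).

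\textbf{The main obstacle} I expect is bookkeeping: getting the variable substitutions into \refEq{r12} exactly right so that, after the double application and the regluing via (R4), the element $(r;s)\conv$ genuinely appears rather than some cousin like $(r\conv;s)\conv$ — the involution (R6) will be needed to simplify iterated converses such as $r\conv\conv$ back to $r$, and I must be careful that \refL{lm2}'s hypotheses (R1)--(R3), (R6), \refEq{r8} are not silently invoked, since here only (R1)--(R3), (R4), (R5), and \refEq{r12} are assumed. In particular (R6) is \emph{not} available, so every step must be phrased so that no un-cancelled double converse is ever produced; this is the delicate point, and it is why the statement is framed with \refEq{r12} (a two-sided biconditional) rather than \refEq{r11}, since the symmetry of the biconditional is what lets the argument run without (R6).
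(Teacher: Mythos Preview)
Your overall strategy is exactly the paper's: establish the equivalence
\[
(r;s)\ssm\cdot t = 0 \quad\Longleftrightarrow\quad (s\ssm;r\ssm)\cdot t = 0
\]
for all $t$, by three applications of \refEq{r12} together with (R4) and (R5), and then conclude (R7) by Boolean algebra. The chain you outline,
\[
((r;s);t_0)\cdot w = 0 \ \stackrel{\text{(R4)}}{\Longleftrightarrow}\ (r;(s;t_0))\cdot w = 0 \ \stackrel{\refEq{r12}}{\Longleftrightarrow}\ (r\ssm;w)\cdot(s;t_0)=0 \ \stackrel{\refEq{r12}}{\Longleftrightarrow}\ ((s\ssm;r\ssm);w)\cdot t_0 = 0,
\]
combined with the single application of \refEq{r12} to $((r;s);t_0)\cdot w=0$ giving $((r;s)\ssm;w)\cdot t_0=0$, is correct and yields
\[
((r;s)\ssm;w)\cdot t_0 = 0 \quad\Longleftrightarrow\quad ((s\ssm;r\ssm);w)\cdot t_0 = 0
\]
for all $t_0,w$.

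There is, however, a bookkeeping slip of precisely the kind you anticipate. To extract the key equivalence you must specialize $w=\ident$ (and invoke (R5) on $(r;s)\ssm;\ident$ and $(s\ssm;r\ssm);\ident$), \emph{not} $t_0=\ident$. Setting $t_0=\ident$ instead gives, for all $w$, $((r;s)\ssm;w)\cdot\ident = 0 \Leftrightarrow ((s\ssm;r\ssm);w)\cdot\ident = 0$; one more pass through \refEq{r12} and (R5) then yields only $(r;s)\ssm\conv = (s\ssm;r\ssm)\ssm$, which is exactly the situation requiring (R6) that you flag as forbidden. With the correct specialization $w=\ident$ your argument goes through cleanly, without (R6), and is essentially a reorganization of the paper's chain (the paper inserts $\ident$ via (R5) at the outset rather than carrying the auxiliary variable $w$ and specializing at the end).
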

\begin{proof}The key step in
the argument is the proof of the  equivalence
\begin{align*}
  (r;s)\ssm\cdot t=0\qquad&\text{if and only if}\qquad
  (s\ssm;r\ssm)\cdot t=0\per\tag{1}\label{Eq:lm9.2}\\
  \intertext{for all elements $r$, $s$, and $t$.  To establish \refEq{lm9.2}, observe that}
   (r;s)\ssm\cdot t=0\qquad&\text{if and only if}\qquad [(r;s)\ssm;\ident]\cdot t=0\comma\\
   &\text{if and only if}\qquad [(r;s);t] \cdot
  \ident=0\comma\\
  &\text{if and only if}\qquad [r;(s;t)] \cdot
  \ident=0\comma\\
  &\text{if and only if}\qquad (r\ssm;\ident)\cdot(s;t)=0\comma\\
 &\text{if and only if}\qquad  r\ssm \cdot(s;t)=0\comma\\
 &\text{if and only if}\qquad  (s;t)\cdot r\ssm=0\comma\\
 &\text{if and only if}\qquad  (s\ssm; r\ssm)\cdot t=0\per
\end{align*}
\noindent The first equivalence uses (R5), the second uses~\refEq{r12} (with $r;s$, $t$, and $\ident$ in place of $r$, $s$, and $t$ respectively), the third uses~(R4),  the fourth
uses~\refEq{r12} (with~$s;t$ and $\ident$  in place of $s$ and $t$ respectively), the fifth uses (R5),
the sixth uses Boolean algebra, and the seventh uses~\refEq{r12}
(with $s$, $t$, and $r\ssm$ in place of $r$, $s$ and~$t$   respectively).

Turn now to the proof of the second involution law. Obviously,
\begin{align*}
(r;s)\ssm\cdot-[(r;s)\ssm]&=0\comma\\ \intertext{by Boolean
algebra, so} (s\ssm;r\ssm)\cdot -[(r;s)\ssm]&=0\comma
\end{align*}  by \refEq{lm9.2} (with
$-[(r;s)\ssm]$ in place of $t$).  It follows by Boolean algebra
that
\begin{equation*}\tag{2}\label{Eq:lm9.3}
s\ssm;r\ssm\le(r;s)\ssm\per
\end{equation*}
 Similarly, it is obvious that
\begin{align*}
(s\ssm;r\ssm)\cdot-(s\ssm;r\ssm)&=0\comma\\ \intertext{by Boolean
algebra.\, so} (r;s)\ssm\cdot -(s\ssm;r\ssm)&=0\comma
\end{align*}
by \refEq{lm9.2} (with $-(s\ssm;r\ssm)$ in place of $t$). It
follows by Boolean algebra that
\begin{equation*}\tag{3}\label{Eq:lm9.4}
(r;s)\ssm\le s\ssm;r\ssm\per
\end{equation*}
 Combine \refEq{lm9.3} and \refEq{lm9.4} to arrive at the second
involution law.
\end{proof}

Take $\mc R$ to be the system of equations obtained from
(R1)--(R10) by dropping~(R7), and replacing the right-hand
distributive law  (R8) with its left-hand version~\refEq{r8}.

\begin{theorem}\label{T:tv}
  The system of axioms $\mc R$ is equivalent to Tarski's system
  \textnormal{(R1)--(R10).}
\end{theorem}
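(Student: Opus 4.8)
The plan is to observe that the theorem asserts the coincidence of two model classes, and to establish each of the two inclusions by chaining together the four lemmas already proved; essentially no fresh computation is required.

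First I would dispose of the easy inclusion: every model of Tarski's system \textnormal{(R1)--(R10)} is a model of $\mc R$. Indeed, (R1)--(R6), (R9), and (R10) are literally among Tarski's axioms, and the left-hand distributive law \refEq{r8} is derivable from (R6), (R7), (R8), and (R9) by \refL{lm3.1}; hence all nine axioms of $\mc R$ hold.

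The substantive inclusion is the converse: every model $\f A$ of $\mc R$ also satisfies (R7) and (R8). Here is the chain I would run. Since $\f A$ satisfies (R1)--(R3), (R6), and \refEq{r8}, \refL{lm2} applies and tells us that in $\f A$ axiom (R10) is equivalent to the biconditional \refEq{r12}; as (R10) is itself one of the axioms of $\mc R$, we conclude that \refEq{r12} holds in $\f A$. Next, $\f A$ satisfies (R1)--(R3) together with (R4), (R5), and --- having just shown it --- \refEq{r12}, so \refL{lm3.2} applies and yields (R7). Finally, $\f A$ now satisfies (R6), (R7), and (R9), together with \refEq{r8}, so by \refL{lm3.1} (the direction deriving (R8) from \refEq{r8}) axiom (R8) holds in $\f A$. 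Thus $\f A$ validates all of \textnormal{(R1)--(R10)}, and the two systems have the same models.

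The hard part, such as it is, is purely bookkeeping: one must check that the hypotheses of each lemma are in hand at the moment it is invoked, and that the overall derivation is acyclic. It is: \refEq{r12} is obtained from the axioms of $\mc R$ alone; (R7) is obtained from the equational core (R1)--(R5) of $\mc R$ together with \refEq{r12}, with no appeal to (R7) or (R8); and (R8) is derived only afterwards, from the already-established (R7) plus (R6), (R9), and \refEq{r8}. There is no circularity, and all the genuine analytic work was already carried out in the preceding lemmas. I would also note in passing that this argument makes transparent why the exchange is ``fair'': removing (R7) is compensated exactly by strengthening (R8) to its left-hand form \refEq{r8}, since under (R6), (R7), and (R9) the two distributive laws are equivalent by \refL{lm3.1}.
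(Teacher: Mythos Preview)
Your proof is correct and follows essentially the same route as the paper: derive \refEq{r12} from $\mc R$ via \refL{lm2}, then (R7) via \refL{lm3.2}, and finally (R8) via \refL{lm3.1}; the easy direction is handled identically. The only difference is presentational---you add the explicit acyclicity check and the closing remark about the ``fair exchange'', neither of which appears in the paper's terse version.
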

\begin{proof} It is easy to check that Tarski's  axioms imply  the axioms
in $\mc R$.  In fact, it is only necessary to derive \refEq{r8}
from (R1)--(R10), and this is done in \refL{lm3.1}.

To prove that, conversely, the axioms in $\mc R$ imply Tarski's
axioms, it must be shown that (R7) and (R8) are derivable from
$\mc R$. Apply \refL{lm2} to obtain
\refEq{r12}, and apply \refL{lm3.2} to obtain (R7).  An
application of \refL{lm3.1} now yields (R8).
\end{proof}

\section{The independence of axiom system $\mc R$}\label{S:indepr}

Interestingly, the axioms in $\mc R$ are also all independent of
one another.  For example, the left-hand distributive law
\refEq{r8} fails in the  model $\ff A8$,  while the remaining
axioms of~$\mc R$ are valid in $\ff A8$, so \refEq{r8} is
independent of the other axioms of $\mc R$. In fact, the same
assignment of values to $r$, $s$, and $t$ that invalidates (R8) in
$\ff A8$ also invalidates \refEq{r8}, since relative
multiplication and addition are commutative operations in $\ff
A8$. Alternatively,~\refEq{r8} must fail in the independence model
$\ff A7$, in which the remaining axioms of $\mc R$ are valid.
Indeed, if \refEq{r8} were valid in $\ff A7$, then~$\ff A7$ would
be a model of $\mc R$, and therefore also of (R7), by \refT{tv};
but we have seen that this is not the case. To obtain a concrete
instance in which (R8) fails, let $r$ be any non-zero element in
$\ff A7$, and let $s$ and $t$ be $\ident$ and $\diver$
respectively.  The definition of relative multiplication in $\ff
A7$   implies that
\begin{align*}
r;(s+t)&=r;(\ident+\diver)=r;1=0\\ \intertext{but}
r;s+r;t&=r;\ident+r;\diver=r+0=r\neq 0\per
\end{align*}    This argument actually shows more than is claimed.
Since (R8) is valid in $\ff A7$,  axiom \refEq{r8} is independent
of the set of axioms (R1)--(R6), (R8), (R9), and (R10).  We will
need
  this observation   later.

 As regards the
independence of  (R$n$) in $\mc R$ for $1\le n\le 6$ and $n= 10$,
the left-hand distributive law~\refEq{r8} is valid in the
independence model $\ff An$ constructed above, so~$\ff An$ also
serves to establish the independence of (R$n$) with respect
to~$\mc R$. However, the left-hand distributive law fails in $\ff
A9$, so a new model must be constructed in order to establish the
independence of~(R9) with respect to $\mc R$.

 The
independence model $\ff B9$ for (R9) with respect to $\mc R$ is
obtained from the relation algebra $\f D$ constructed in
\refTa{b4} by modifying the definitions of relative multiplication
and converse. In order to avoid confusion of notation,  write
$\,;\,$ and~$\,\conv\,$ for the operations of relative
multiplication and converse to be defined in $\ff B9$, and write
$\,\scir$ for the operation of relative multiplication in $\f D$;
a separate notation for the operation of converse in $\f D$ is
unnecessary, since this operation is defined to be the identity
function. In $\ff B9$, converse is defined to interchange the
elements
\[\ident +a\qquad\text{and}\qquad\ident +b\comma\] and to map
every other element  to itself. (Notice the similarity in
intuition  with the model $\ff A9$.)  As a result, (R9) fails in
$\ff B9$. Indeed, if $r$ and $s$ are taken to be $\ident$ and $a$
respectively, then
\[(r+s)\ssm=(\ident +a)\ssm=\ident+b\neq \ident
+a=r\ssm+s\ssm\per\]

As in the case of the algebra $\ff A9$, the change in the
definition of converse requires a corresponding compensatory
change in the definition of relative multiplication.  If $r$ is
one of the elements $\ident +a$ and $\ident +b$, and $s$ is one of
the atoms $a$ and $b$, then the relative product $r;s$ in $\ff B9$
is defined by
\[r;s=r\ssm\scir s\per\]  In all other cases, relative
multiplication in $\ff B9$ is defined to coincide with relative
multiplication in $\f D$.  Thus, only  four entries in \refTa{b4}
are changed in the passage from $\f D$ to $\ff B9$ (see
\refTa{b5}). Notice that the preceding equation is actually valid
for all choices  $r$ and $s$ except   when $r$ is one of $\ident
+a$ and $\ident +b$, and $s$ is $\ident$. Indeed, if $r$ is
different from $\ident +a$ and $\ident +b$, then $r;s$ and $r\ssm$
coincide  with $r\scir s$ and $r$ respectively, by definition,  so
that\[r;s=r\scir s= r\ssm\scir s\per\]  On the other hand, if $r$
is one of  $\ident +a$ and $\ident +b$, and $s$ is different from
$\ident$, then either $s$ is $0$, in which case the desired
equality holds trivially; or $s$ is one of $a$ and~$b$, in which
case the   equality holds by definition; or $s$ is the sum of at
least two atoms, in which case both $r;s$ and $r\ssm\scir s$ are
equal to $1$, yielding again the desired equality (see
\refTa{b4}).
\begin{table}[htb]
  \centering\begin{tabular}{|c|c|c|}\hline
     $\,\scir\,$& $a$ & $b$ \\ \hline
        $\ident+a$ & $1$ & $\diver$
  \\ \hline $\ident+b$ &
  $\diver$ & $1$     \\ \hline
 \end{tabular}\qquad\qquad
  \begin{tabular}{|c|c|c|}\hline
     $\,;\,$& $a$ & $b$ \\ \hline
        $\ident +a$ & $\diver$ & $1$
  \\ \hline $\ident +b$ &
  $1$ & $\diver$     \\ \hline
 \end{tabular}\vspace{.1in}
  \caption{Comparison of the  differences in the relative multiplication tables   for   $\f D$ and $\ff B9$.}\label{Ta:b5}
\end{table}

Axioms (R1)--(R3),  (R5), and (R6)  obviously all hold in $\ff
B9$, so it remains to verify the validity of (R4), (R7),
\refEq{r8}, and (R10). Consider first (R4).  If one of the
elements $r$, $s$, and $t$ is $0$, then both sides of (R4) reduce
to $0$ in $\ff B9$, and if one of these elements is $\ident$, then
both sides of (R4) reduce to the relative product of the other two
elements.  In all other cases, both sides of (R4) reduce to $1$ in
$\ff B9$.  In more detail, the relative product of two elements
different from $0$ and $\ident$ is either $\diver$ or $1$, and the
relative product of these last two elements with any  element
different from $0$ and $\ident$ is always $1$ (see Tables
\ref{Ta:b4} and \ref{Ta:b5}).

Turn now to the verification of (R7). If $r$ or $s$ is $\ident$,
then both sides of (R7) reduce to $s\ssm$ or $r\ssm$ respectively,
so in this case (R7) holds trivially.  In all other cases, we have
\[(r;s)\ssm=(r\ssm\scir s)\ssm=r\ssm\scir s=s\scir r\ssm=s\ssm\conv\scir r\ssm=s\ssm;r\ssm\per\]
The first and last equalities follow  from the definition of
relative multiplication in~$\ff B9$ and the assumption that
neither $r$ nor $s$ is $\ident$ (see the observations made above).
The second equality uses the fact that $r\ssm\scir s$ is either
$0$, $\diver$, or $1$ in all cases under consideration, and
converse maps each of these elements to itself in $\ff B9$. The
third equality uses the fact that relative multiplication in $\f
D$ is commutative, and the fourth  uses the validity of (R6) in
$\ff B9$.

The next task is the verification of \refEq{r8}.  For all values
of $r$ except $\ident+a$ and~$\ident + b$, the computation of both
sides of \refEq{r8} is the same in $\ff B9$ as it is in $\f D$, so
these instances of \refEq{r8} are all valid in $\ff B9$. Also, if
$t$ is $0$, or if $s=t$, then both sides of~\refEq{r8} reduce to
$r;s$, and analogously if $s$ is $0$, so   these instances of
\refEq{r8} are also valid in $\ff B9$. Assume now that $r$ is one
of
  $\ident + a$ and $\ident + b$, and that $s$ and~$t$ are distinct non-zero
  elements. If neither $s$ nor $t$ is $\ident$, then
\begin{equation*}
 r;(s+t)=r\ssm\scir(s+t)=r\ssm\scir s +r\ssm\scir t=r;s+r;t\comma
\end{equation*}
  by the definition of relative multiplication
  in $\ff B9$ and the validity of \refEq{r8} in $\f D$, so these instances of \refEq{r8} hold in $\ff B9$.
   On the other hand, if $s$ is $\ident$, then $r; s$ is $r$, which is  above $\ident$ by assumption; and   $t$ is
  a non-zero element different from $\ident$, by assumption; so $r;t$ must either
  be $\diver$ or $1$, and therefore \[r;s+r;t\ge \ident+\diver=1\per\]  Since  $s+t$ is the sum of at least two atoms,
  \[r;(s+t)=1\comma\] by \refTa{b4}, and therefore all such instances of \refEq{r8} hold in $\ff B9$ as well.  A similar argument applies if $t$ is
  $\ident$. This completes the verification of \refEq{r8} in $\ff
  B9$.

Turn finally to the verification of (R10), or equivalently,
\refEq{r10}. If $r$ is different from both $\ident +a$ and $\ident
+b$, then the computation of
\begin{equation*}\tag{1}\label{Eq:b9.1}
r\ssm;-(r;s)
\end{equation*}
  is the same in $\ff B9$ as it is in $\f D$, and consequently
 \refEq{b9.1}
must be below $-s$, by the validity of \refEq{r10} in $\f D$.
Suppose now that $r$ is one of $\ident+a$ and $\ident +b$.  If~$s$
is~$0$, then~$-s$ is $1$, so obviously \refEq{b9.1} is below $-s$.
If $s$ is the sum of at least two atoms, then~$r;s$ is $1$, and
therefore $-(r;s)$ is $0$ (see \refTa{b4}).
Consequently,~\refEq{b9.1} reduces to~$0$, which is below $-s$.
If $s$ is $\ident$, then \refEq{b9.1} reduces to $r\ssm;-r$, which
in the cases under consideration must yield $\diver$, by
\refTa{b5} and the definition of converse in $\ff B9$ (since in
this case $-r$ is the subdiversity atom that is below $r\ssm$).
Also, $-s$ is~$\diver$, so \refEq{b9.1} is equal to $-s$.  In the
remaining cases, $s$ is one of $a$ and $b$.  Consequently,~$r;s$
assumes one of two values $\diver$ or $1$, according to whether
$s$ is, or is not, the subdiversity atom below $r$ (see
\refTa{b5}).  In the first case, $-(r;s)$ is~$\ident$, so
\refEq{b9.1} reduces to $r\ssm$, which coincides with $-s$,  by
\refTa{b5} and the definition of relative multiplication. For
example, if $r$ is $\ident+a$ and $s$ is $a$, then
\begin{multline*}
r\ssm;-(r;s)=(\ident +a)\ssm;-((\ident+a);a)=(\ident +a)\ssm;-
\diver\\=(\ident +a)\ssm;\ident=(\ident +a)\ssm=(\ident +b)=-s\per
\end{multline*}
In the second case, $r;s$ is $1$, by \refTa{b5}, so  $-(r;s)$ is
$0$, and therefore \refEq{b9.1} reduces to $0$, which is certainly
below $-s$.  This completes the verification of \refEq{r10}.

The following theorem has been proved.
\begin{theorem}\label{T:indepr}
The set of axioms $\mc R$ is independent\per
\end{theorem}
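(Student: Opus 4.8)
The plan is to exhibit, for each of the nine axioms of $\mc R$ — namely (R1)--(R6), \refEq{r8}, (R9), and (R10) — a model of the relation-algebra similarity type in which precisely that axiom fails and the remaining eight members of $\mc R$ all hold. Nearly all of the required models have already been built, either in the proof of \refT{theorem} or in the present section, so the proof reduces to the bookkeeping observation that each of those models is still an independence model once (R8) is replaced by \refEq{r8} in the list of axioms.

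First I would dispose of \refEq{r8}. As was noted at the start of the section, the model $\ff A8$ satisfies every one of Tarski's axioms except (R8), hence in particular (R1)--(R6), (R9), and (R10); and because relative multiplication and addition are commutative in $\ff A8$, the same assignment to $r$, $s$, $t$ that refutes (R8) there also refutes \refEq{r8}. (The model $\ff A7$ would do just as well.) Next, for each $n$ with $1\le n\le 6$ and for $n=10$, I would reuse the independence model $\ff An$ from \refT{theorem}. Such an $\ff An$ already validates all of Tarski's axioms except (R$n$); the only point to add is that \refEq{r8} holds in $\ff An$ too. This is immediate: in $\ff A1$ addition is left-hand projection, so both sides of \refEq{r8} collapse to $r;s$; in $\ff A2$, $\ff A3$, $\ff A4$, and $\ff A{10}$ relative multiplication is commutative, so \refEq{r8} follows from the already-verified right-hand law (R8); in $\ff A5$ both sides reduce to $0$; and in $\ff A6$ the law \refEq{r8} mentions no converse and so holds because it holds in the underlying relation algebra. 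Hence each $\ff An$ refutes (R$n$) while validating the other eight axioms of $\mc R$.

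The only case calling for genuinely new work is (R9), since \refEq{r8} fails in $\ff A9$. For this I would take the model $\ff B9$ constructed above from the relation algebra $\f D$: the displayed computation shows that (R9) fails in $\ff B9$, while (R1)--(R3), (R5), and (R6) are transparent and (R4), (R7), \refEq{r8}, and (R10) were verified in detail in the preceding pages — so every axiom of $\mc R$ except (R9) holds in $\ff B9$. Assembling the nine models — $\ff A8$ for \refEq{r8}, $\ff An$ for (R$n$) with $n\in\{1,\dots,6,10\}$, and $\ff B9$ for (R9) — yields the theorem.

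I do not expect a real obstacle here: the substantive constructions and verifications, above all those for $\ff B9$, have already been carried out. The one thing to be careful about is that, model by model, swapping (R8) for \refEq{r8} among the ``remaining'' axioms does not spoil independence, and the commutativity remarks in the second paragraph are exactly what secure this.
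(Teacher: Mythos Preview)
Your proposal is correct and follows essentially the same approach as the paper: use $\ff A8$ (or $\ff A7$) for \refEq{r8}, reuse $\ff An$ for $n\in\{1,\dots,6,10\}$ after observing that \refEq{r8} holds there, and invoke the newly constructed $\ff B9$ for (R9). You supply slightly more explicit reasons for why \refEq{r8} holds in each $\ff An$ (commutativity, triviality, etc.), whereas the paper simply asserts it, but the structure of the argument is the same.
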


Notice that (R8) fails in $\ff B9$.  For instance, if $r$, $s$,
and $t$ are $\ident$, $a$, and $b$ respectively\comma then
\begin{equation*}
  (r+s);t=(\ident +a);b=1\qquad\text{and}\qquad
  r;t+s;t=\ident;b+a;b=b+\diver=\diver\per
\end{equation*}

A computational check using Mace4 has shown that there is no
independence model for (R9) of cardinality less than $8$, so $\ff
B9$ is a minimal  independence model for this axiom with respect
to the axiom system $\mc R$.

\section{A second variant of Tarski's axiom system}\label{S:sys.
s}

As was   pointed out above, in  the   independence models $\ff A9$
and $\ff B9$ for (R9), the right-hand and left-hand distributive
laws for relative multiplication fail respectively. This raises
the  question of whether (R9) is derivable
from~(R1)--(R8),~\refEq{r8}, and (R10). As it turns out, (R9) is
so derivable, and in fact even more is true: if~\refEq{r8} is
added to Tarski's original axiom system, then both (R7) and (R9)
become redundant in the sense that they are both derivable from
the remaining axioms of the system.

\begin{lemma}\label{L:lm10}
 Under the assumption of \textnormal{(R1)--(R3)}\comma
axioms  \textnormal{(R5)} and \textnormal{(R8)}\comma together
with \textnormal{\refEq{r12}}\comma
 imply \textnormal{(R9)}\per
\end{lemma}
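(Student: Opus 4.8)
The plan is to avoid any converse‑specific machinery and instead run everything through the ``zero‑product'' form \refEq{r12} of Tarski's law: I will show that $(r+s)\conv$ and $r\conv+s\conv$ have exactly the same Boolean annihilators, and then finish by Boolean algebra. Throughout, (R1)--(R3) supply the Boolean laws I need (complementation, distributivity of $\,\cdot\,$ over $\,+\,$, and the fact that $a+b=0$ iff $a=0$ and $b=0$).

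First I would isolate the single auxiliary fact required. Combining \refEq{r12} with the right-hand identity law (R5) gives, for all elements $p$ and $u$,
\[
(p;u)\cdot\ident=0\qquad\text{if and only if}\qquad p\conv\cdot u=0 .
\]
Indeed, apply \refEq{r12} with $s$ and $t$ replaced by $u$ and $\ident$ respectively, obtaining $(p;u)\cdot\ident=0 \iff (p\conv;\ident)\cdot u=0$, and then simplify $p\conv;\ident$ to $p\conv$ via (R5). I will call this equivalence $(\ast)$; note it uses neither (R6) nor (R8).

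Next comes the core computation. Fixing $r$, $s$, and an arbitrary element $t$, I would run the chain
\[
(r+s)\conv\cdot t=0
\ \Longleftrightarrow\ \bigl((r+s);t\bigr)\cdot\ident=0
\ \Longleftrightarrow\ (r;t+s;t)\cdot\ident=0 ,
\]
where the first step is $(\ast)$ with $p=r+s$ and $u=t$, and the second is (R8). Boolean algebra (distributivity, and $a+b=0\iff a=0$ and $b=0$) rewrites the right-hand side as the conjunction $(r;t)\cdot\ident=0$ and $(s;t)\cdot\ident=0$; applying $(\ast)$ twice more turns this into $r\conv\cdot t=0$ and $s\conv\cdot t=0$; and recombining by Boolean algebra yields $(r\conv+s\conv)\cdot t=0$. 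Hence $(r+s)\conv\cdot t=0$ if and only if $(r\conv+s\conv)\cdot t=0$, for every $t$.

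Finally I would specialize $t$: taking $t=-\bigl((r+s)\conv\bigr)$ forces $(r\conv+s\conv)\cdot-\bigl((r+s)\conv\bigr)=0$, i.e.\ $r\conv+s\conv\le(r+s)\conv$, and taking $t=-(r\conv+s\conv)$ forces the reverse inequality; antisymmetry of $\,\le\,$ then gives (R9). I do not expect a genuine obstacle here — the argument is short once $(\ast)$ is in hand; the only thing to watch is keeping the substitutions into \refEq{r12} straight, and it is worth remarking that this route never invokes (R6), using only (R1)--(R3), (R5), (R8), and \refEq{r12}.
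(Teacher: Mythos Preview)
Your proposal is correct and follows essentially the same route as the paper's own proof: you establish the equivalence $(r+s)\conv\cdot t=0\iff(r\conv+s\conv)\cdot t=0$ for all $t$ via \refEq{r12}, (R5), and (R8), then specialize $t$ to the two complements to get both inequalities. The only cosmetic difference is that you package the combination of \refEq{r12} and (R5) into a named auxiliary equivalence $(\ast)$, whereas the paper applies them in line; your closing remark that (R6) is never invoked is accurate and worth keeping.
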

\begin{proof}The proof is very similar  to the proof of \refL{lm3.2}.  The key step in
the argument is the proof of the  equivalence
\begin{align*}
  (r+s)\ssm\cdot t=0\qquad&\text{if and only if}\qquad
  (r\ssm+s\ssm)\cdot t=0\per\tag{1}\label{Eq:lm10.2}\\
  \intertext{for all elements $r$, $s$, and $t$.  To establish \refEq{lm10.2}, observe that}
   (r+s)\ssm\cdot t=0\qquad&\text{if and only if}\qquad [(r+s)\ssm;\ident]\cdot t=0\comma\\
   &\text{if and only if}\qquad [(r+s);t] \cdot
  \ident=0\comma\\
  &\text{if and only if}\qquad [(r;t)+(s;t)] \cdot
  \ident=0\comma\\
  &\text{if and only if}\qquad  (r;t)\cdot\ident +(s;t)\cdot\ident=0\comma\\
  &\text{if and only if}\qquad (r\ssm;\ident)\cdot t+ (s\ssm ;\ident)\cdot t=0\comma\\
 &\text{if and only if}\qquad  r\ssm \cdot t +  s\ssm\cdot t=0\comma\\
 &\text{if and only if}\qquad  (r\ssm+s\ssm)\cdot t=0\per
\end{align*}
\noindent The first and sixth equivalences use  (R5), the second
uses~\refEq{r12} (with $r+s$, $t$, and~$\ident$ in place of $r$,
$s$, and $t$ respectively), the third uses~(R8),  the fourth and
seventh use  Boolean algebra, and the fifth uses~\refEq{r12} twice
(the first time with $t$ and $\ident$ in place of $s$ and $t$
respectively, and the second time with $s$,~$t$ and $\ident$ in
place of~$r$,~$s$ and~$t$ respectively).

Turn now to the proof of the second involution law. Obviously,
\begin{align*}
(r+s)\ssm\cdot-[(r+s)\ssm]&=0\comma\\ \intertext{by Boolean
algebra, so} (r\ssm+s\ssm)\cdot -[(r+s)\ssm]&=0\comma
\end{align*}  by \refEq{lm10.2} (with
$-[(r+s)\ssm]$ in place of $t$).  It follows by Boolean algebra
that
\begin{equation*}\tag{2}\label{Eq:lm10.3}
r\ssm+s\ssm\le(r+s)\ssm\per
\end{equation*}
 Similarly, it is obvious that
\begin{align*}
(r\ssm+s\ssm)\cdot-(r\ssm+s\ssm)&=0\comma\\ \intertext{by Boolean
algebra, so} (r+s)\ssm\cdot -(r\ssm+s\ssm)&=0\comma
\end{align*}
by \refEq{lm10.2} (with $-(r\ssm+s\ssm)$ in place of $t$). It
follows by Boolean algebra that
\begin{equation*}\tag{3}\label{Eq:lm10.4}
(r+s)\ssm\le r\ssm+s\ssm\per
\end{equation*}
 Combine \refEq{lm10.3} and \refEq{lm10.4} to arrive at (R9).
\end{proof}

Take $\mc S$ to be the axiom system    consisting of equations
(R1)--(R6), (R8), \refEq{r8}, and (R10).
\begin{theorem}\label{T:tv2}
  The system of axioms $\mc S$ is equivalent to Tarski's system
  \textnormal{(R1)--(R10).}
\end{theorem}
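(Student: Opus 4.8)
The plan is to mirror the proof of \refT{tv} almost verbatim, since all the real work has already been packaged into the preceding lemmas. First I would dispatch the easy direction: every axiom of $\mc S$ is among Tarski's axioms (R1)--(R10) with the sole exception of the left-hand distributive law \refEq{r8}, and \refL{lm3.1} shows that \refEq{r8} is derivable from (R6), (R7), and (R9). Hence Tarski's system implies $\mc S$.

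For the converse direction I would argue that $\mc S$ implies (R7) and (R9), which are the only axioms of Tarski's system not literally present in $\mc S$. The key observation is that $\mc S$ contains (R1)--(R3), (R6), \refEq{r8}, and (R10), so \refL{lm2} applies and yields the biconditional \refEq{r12}, that is, (R11$'$). Once \refEq{r12} is in hand, I would feed it into the two later lemmas: \refL{lm3.2}, whose hypotheses (R1)--(R3), (R4), (R5), and \refEq{r12} are now all available, produces the second involution law (R7); and \refL{lm10}, whose hypotheses (R1)--(R3), (R5), (R8), and \refEq{r12} are likewise available, produces the distributive law for converse (R9). With (R7) and (R9) derived, all of (R1)--(R10) hold, so $\mc S$ and Tarski's system are equivalent.

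I do not anticipate a genuine obstacle: the theorem is essentially a bookkeeping corollary of \refL{lm2}, \refL{lm3.1}, \refL{lm3.2}, and \refL{lm10}. The only point that requires a moment's care is to check that the lemmas are invoked with the correct distributive law --- \refL{lm10} needs the right-hand form (R8), while \refL{lm2} needs the left-hand form \refEq{r8} --- and since $\mc S$ is deliberately designed to contain \emph{both} distributive laws, both lemmas apply directly and no extra derivation is needed.
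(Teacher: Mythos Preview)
Your proposal is correct and matches the paper's proof essentially step for step: both directions are handled exactly as you describe, invoking \refL{lm3.1} for the forward direction and then \refL{lm2}, \refL{lm3.2}, and \refL{lm10} in that order for the converse. Your additional remark about checking which distributive law each lemma requires is a nice clarification that the paper leaves implicit.
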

\begin{proof} It is easy to check that Tarski's  axioms imply  the axioms
in $\mc S$.  In fact, it is only necessary to derive \refEq{r8}
from (R1)--(R10), and that is done in \refL{lm3.1}.

To prove that, conversely, the axioms in $\mc S$ imply Tarski's
axioms, it must be shown that (R7) and (R9) are derivable from
$\mc S$. Observe first that \refEq{r12} is derivable from $\mc S$,
by \refL{lm2}. Consequently,  (R7) is derivable from $\mc S$, by
\refL{lm3.2}, and  (R9) is derivable from $\mc S$, by \refL{lm10}.
\end{proof}

\section{The independence of axiom system $\mc S$}\label{S:indeps}

The axioms in $\mc S$ are   independent of one another. Indeed, as
was already pointed out in the first paragraph of \refS{indepr},
the left-hand distributive law \refEq{r8} fails in the model $\ff
A7$, while the remaining axioms of~$\mc S$ are valid in $\ff A7$.
Consequently,~\refEq{r8} is independent of the other axioms of
$\mc S$.  Similarly, it was proved in \refS{indepr} that~(R8)
fails in the model $\ff B9$, while (R1)--(R6), \refEq{r8}, and
(R10) all hold in that model. Consequently, (R8) is independent of
the remaining axioms in $\mc S$.
 As regards the
independence of axioms (R$n$) in $\mc S$ for $1\le n\le 6$ and $n=
10$, axiom~\refEq{r8} is valid in the independence model $\ff An$,
so $\ff An$ also serves to establish the independence of~(R$n$) in
$\mc S$.
\begin{theorem}\label{T:ins}
  The set of axioms $\mc S$ is independent\per
\end{theorem}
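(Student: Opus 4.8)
The plan is to exhibit, for each axiom $\vp$ in $\mc S$, a model in which $\vp$ fails while every other axiom of $\mc S$ is valid; by definition this establishes the independence of $\mc S$. Almost all of the required models have already been produced in the preceding sections, so the proof should be largely a matter of assembling the relevant facts, supplemented by one small extra verification.

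First I would dispose of the two axioms of $\mc S$ that are not among Tarski's original ten. For the left-hand distributive law \refEq{r8} I would use the model $\ff A7$: it was shown in \refS{indepr} that \refEq{r8} fails in $\ff A7$ while (R1)--(R6), (R8), (R9), and (R10) all hold there, so in particular every axiom of $\mc S$ other than \refEq{r8} holds in $\ff A7$. (Here $\ff A8$ does \emph{not} serve, since (R8) is itself an axiom of $\mc S$ and fails in $\ff A8$.) For the right-hand distributive law (R8) I would use the model $\ff B9$ of \refS{indepr}: there (R8) fails while (R1)--(R6), \refEq{r8}, and (R10) all hold, and this is precisely the remaining set of axioms of $\mc S$.

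Next I would treat the axioms (R$n$) for $n\in\{1,2,3,4,5,6,10\}$, using the independence model $\ff An$ constructed in the proof of \refT{theorem}. By that construction, (R$n$) fails in $\ff An$ while every other axiom among (R1)--(R10) holds there, so in particular every equational axiom of $\mc S$ other than (R$n$) and \refEq{r8} is valid in $\ff An$. The one point that still needs checking is that \refEq{r8} holds in $\ff An$ as well. When $n\in\{1,2,3,4,5,10\}$ this follows immediately from \refL{lm3.1}, since (R6), (R7), (R9), and (R8) are all valid in $\ff An$, so that \refEq{r8} is equivalent to (R8) there. When $n=6$ I would argue directly: $\ff A6$ is obtained from a relation algebra $\f A$ by redefining converse only, leaving addition and relative multiplication untouched, and \refEq{r8} makes no reference to converse, so it holds in $\ff A6$ because it holds in $\f A$. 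Thus in every case $\ff An$ witnesses the independence of (R$n$) in $\mc S$.

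Putting these pieces together shows that each of the nine axioms of $\mc S$ is independent of the remaining ones, which is exactly the assertion of the theorem. The only work beyond bookkeeping is the verification that \refEq{r8} survives in the base models $\ff A1,\dots,\ff A6$ and $\ff A{10}$, and I expect that to be the main --- though entirely routine --- point; as indicated, it reduces at once to \refL{lm3.1}, or for $n=6$ to a one-line argument, so no genuine difficulty arises.
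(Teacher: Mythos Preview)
Your proposal is correct and follows essentially the same approach as the paper: use $\ff A7$ for \refEq{r8}, $\ff B9$ for (R8), and $\ff An$ for (R$n$) with $n\in\{1,\dots,6,10\}$. The paper simply asserts (citing \refS{indepr}) that \refEq{r8} is valid in each $\ff An$, whereas you supply the explicit justification via \refL{lm3.1} (and a direct argument for $n=6$); this is a welcome bit of extra care but not a different route.
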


\end{document}

Let $\cc Z3=\{0,1,2\}$ be the group of integers modulo three under
the operation $\pl$ of addition modulo 3.  The complex algebra of
$\cc Z3$ has as its universe the set of subsets of $\cc Z3$. Its
\begin{table}
  \centering\begin{tabular}{|c|c|c|c|c|c|c|c|c|}\hline
     $\,;\,$& $\varnot$ & $\{0\}$ & $\{1\}$ & $\{2\}$ & $\{0,1\}$ & $\{0,2\}$ & $\{1,2\}$ & $\{0,1,2\}$ \\ \hline
     $\varnot$& $\varnot$ & $\varnot$ & $\varnot$ &$\varnot$ & $\varnot$ & $\varnot$ & $\varnot$ & $\varnot$ \\ \hline
    $\{0\}$ & $\varnot$ &$\{0\}$ & $\{1\}$ & $\{2\}$ & $\{0,1\}$ & $\{0,2\}$ & $\{1,2\}$ & $\{0,1,2\}$ \\ \hline
    $\{1\}$ & $\varnot$ & $\{1\}$ & $\{2\}$ & $\{0\}$ & $\{1,2\}$ & $\{0,1\}$ & $\{0,2\}$ & $\{0,1,2\}$ \\ \hline
     $\{2\}$ & $\varnot$ & $\{2\}$ & $\{0\}$ & $\{1\}$ & $\{0,2\}$ & $\{1,2\}$ & $\{0,1\}$ & $\{0,1,2\}$ \\ \hline
     $\{0,1\}$ & $\varnot$ & $\{0,1\}$ & $\{1,2\}$ & $\{0,2\}$ & $\{0,1,2\}$ & $\{0,1,2\}$ & $\{0,1,2\}$ & $\{0,1,2\}$ \\ \hline
     $\{0,2\}$& $\varnot$ & $\{0,2\}$ & $\{0,1\}$ & $\{1,2\}$ & $\{0,1,2\}$ & $\{0,1,2\}$ & $\{0,1,2\}$ & $\{0,1,2\}$ \\ \hline
     $\{1,2\}$ & $\varnot$ & $\{1,2\}$ & $\{0,2\}$ & $\{0,1\}$ & $\{0,1,2\}$ & $\{0,1,2\}$ & $\{0,1,2\}$ & $\{0,1,2\}$ \\ \hline
  $\{0,1,2\}$ & \ \ $\varnot$\ \ \  & $\{0,1,2\}$ & $\{0,1,2\}$ & $\{0,1,2\}$ & $\{0,1,2\}$ & $\{0,1,2\}$ & $\{0,1,2\}$ & $\{0,1,2\}$ \\ \hline
 \end{tabular}\medskip
  \caption{Operation table for $\,;\,$ in the complex algebra $\com{\cc Z3}$.}\label{Ta:cz3}
\end{table}

\begin{table}
  \centering\begin{tabular}{|c|c|}\hline
     $\,\conv\,$&      \\ \hline
     $\varnot$& $\varnot$   \\ \hline
    $\{0\}$ & $\{0\}$   \\ \hline
    $\{1\}$ & $\{1\}$  \\ \hline
     $\{2\}$ & $\{2\}$  \\ \hline
     $\{0,1\}$ & $\{0,2\}$   \\ \hline
     $\{0,2\}$& $\{0,1\}$ \\ \hline
     $\{1,2\}$ & $\{1,2\}$   \\ \hline
  $\{0,1,2\}$ &  $\{0,1,2\}$  \\ \hline
 \end{tabular}\medskip
  \caption{Operation table for $\,\conv\,$ in the independence model for (R9).}\label{Ta:cz3}
\end{table}

\begin{table}
  \centering\begin{tabular}{|c|c|c|c|c|c|c|c|c|}\hline
     $\,;\,$& $\varnot$ & $\{0\}$ & $\{1\}$ & $\{2\}$ & $\{0,1\}$ & $\{0,2\}$ & $\{1,2\}$ & $\{0,1,2\}$ \\ \hline
     $\varnot$& $\varnot$ & $\varnot$ & $\varnot$ &$\varnot$ & $\varnot$ & $\varnot$ & $\varnot$ & $\varnot$ \\ \hline
    $\{0\}$ & $\varnot$ &$\{0\}$ & $\{1\}$ & $\{2\}$ & $\{0,1\}$ & $\{0,2\}$ & $\{1,2\}$ & $\{0,1,2\}$ \\ \hline
    $\{1\}$ & $\varnot$ & $\{1\}$ & $\{2\}$ & $\{0\}$ & $\{0,2\}$ & $\{1,2\}$ & $\{0,1\}$ & $\{0,1,2\}$ \\ \hline
     $\{2\}$ & $\varnot$ & $\{2\}$ & $\{0\}$ & $\{1\}$ & $\{1,2\}$ & $\{0,1\}$ & $\{0,2\}$ & $\{0,1,2\}$ \\ \hline
     $\{0,1\}$ & $\varnot$ & $\{0,1\}$ & $\{1,2\}$ & $\{0,2\}$ & $\{0,1,2\}$ & $\{0,1,2\}$ & $\{0,1,2\}$ & $\{0,1,2\}$ \\ \hline
     $\{0,2\}$& $\varnot$ & $\{0,2\}$ & $\{0,1\}$ & $\{1,2\}$ & $\{0,1,2\}$ & $\{0,1,2\}$ & $\{0,1,2\}$ & $\{0,1,2\}$ \\ \hline
     $\{1,2\}$ & $\varnot$ & $\{1,2\}$ & $\{0,2\}$ & $\{0,1\}$ & $\{0,1,2\}$ & $\{0,1,2\}$ & $\{0,1,2\}$ & $\{0,1,2\}$ \\ \hline
  $\{0,1,2\}$ & \ \ $\varnot$\ \ \  & $\{0,1,2\}$ & $\{0,1,2\}$ & $\{0,1,2\}$ & $\{0,1,2\}$ & $\{0,1,2\}$ & $\{0,1,2\}$ & $\{0,1,2\}$ \\ \hline
 \end{tabular}\medskip
  \caption{Operation table for $\,;\,$ in the independence model for (R9).}\label{Ta:mr9}
\end{table}

 relative multiplication by $\{1\}$ on the left in $\ff
A9$ shifts  the true value on doubletons  to the right by one step
in the sense that the  value in $\ff A9$ is obtained from the
value in $\comz$ by adding $1$ to each element modulo $3$; and
relative multiplication by $\{2\}$ on the left in $\ff A9$ shifts
the true value on doubletons   to the right by two steps in the
sense that the  value in $\ff A9$ is obtained from the value in
$\comz$ by adding $2$ to each element modulo $3$. For instance,
comparing the  rows of \refTa{mmr9} for $\{1\}$, we see that the
value of $\{1\};\{0,1\}$ in $\comz$ is \{1,2\}, and in $\ff A9$ it
is $\{0,2\}$, and the latter value may be obtained from the former
by adding $1$ to each element  modulo $3$.  Similarly, comparing
the rows of \refTa{mmr9} for $\{2\}$, we see that the value of
$\{2\};\{0,1\}$ in $\comz$ is \{0,2\}, and in $\ff A9$ it is
$\{1,2\}$, and the latter value may be obtained from the former by
adding $2$ to each element  modulo $3$.

\[\f C=( C\smbcomma +^{\f C}\smbcomma -^{\f C}\smbcomma
;^{\f C}\smbcomma\,\conv{}^{\f C}\smbcomma\ident^{\f C}\,)
\]

  the value of each side of (R4) is obtained by computing this
value in $\comz$, and then shifting it to the right by the amount
inherent in $r;s$. Since the two values agree in $\comz$, they
must agree in $\ff A9$. For example, if
\[r=\{1\}, \qquad s=\{1\},\qquad t=\{0,1\},\] then $r;s=\{2\}$. The values of
$r;(s;t)$ and $(r;s);t$ in $\comz$ are both $\{0,2\}$, and   in
$\ff A9$ they are both $\{1,2\}$.  The latter value may be
obtained from the former by adding $2$ to each element modulo $3$.
For one more example,  if
\[r=\{1\}, \qquad s=\{2\},\qquad t=\{1,2\},\] then $r;s=\{0\}$. The values of
$r;(s;t)$ and $(r;s);t$ in $\comz$ are both $\{1,2\}$, and   in
$\ff A9$ they are also both $\{1,2\}$.  The latter value may be
obtained from the former by adding $0$ to each element modulo $3$.

In the last case, $r$ and $t$ are both singletons, and $s$ is a
doubleton.  In this case, the value of $s;t$ is the same in $\ff
A9$ and in $\comz$, so the value of the relative product $r;(s;t)$
in $\ff A9$ is just the value of this relative product in $\comz$,
shifted by the amount inherent in $r$. On the other hand hand, the
value of $r;s$ in $\ff A9$ is obtained from the value   in $\comz$
by shifting it by the amount inherent in $r$, and the computation
of $(r;s);t$ produces no further shifts.  Thus, in both
computations, the final value coincides with the value in $\comz$
shifted by the amount inherent in $r$; in the case of  $r;(s;t)$
the shifting is done in the second step of the computation, while
in the case of $r;s);t$ it is done in the first step.

\begin{table}[htb]\begin{center} \begin{tabular}{ c | c | c | c | c |}
$\,;\,$ & \,$0$\, & $\ident$ & $\diver$ & \,$1$\,\\ \hline $0 $ &
$1$ & $0$ & $0$ & $0$\\ \hline $\ident$ & $0$ & $\ident$ &
$\diver$ & $1$\\ \hline $\diver$ & $0$ & $\diver$ & $1$ & $1$\\
\hline $1$\, & $0$ & \,$1$\, & $1$ & $1$\\ \hline
\end{tabular} \end{center} \caption{Table for the operation
$\,;\,$\per}\label{Ta:R8fails}\end{table}

Consider now the cases when one of the three elements is $\diver$.
If $t$ is~$\diver$, then both sides of (R4) reduce to $\diver$, so
any such instance of (R8) holds in $\ff B{10}$.

of \refEq{1-00008.4} evaluate to $\diver$, and if $t$ is $\ident$,
then both sides of \refEq{1-00008.4} evaluate to $r+s$, so
\refEq{1-00008.4} holds in these two cases. Consider next the case
when $t$ is $0$.  If at least one of $r$ and $s$ is $\diver$ or
$1$, then both sides of~\refEq{1-00008.4} evaluate to $\diver$.
For example, if $r$ is $\diver$, then
\[(r+s);t=(\diver +s);0=\diver\quad\text{and}\quad r;t+s;t=\diver;0+s;0=\diver\comma\]
since $s;0$ is at any rate below $\diver$.  A similar argument
applies when~$r$ is~$1$. The only other possibility in the case
under consideration is that~$r$ and $s$ both assume values in the
set $\{0,\ident\}$, and in this case \refEq{1-00008.4} holds by
the preliminary observations made above.  There remains the case
when $t$ is $1$. If~$r+s$ is $\diver$, then one of $r$ and $s$
must be $\diver$, and the other must be $0$ or $\diver$, because
$\diver$ is an atom in $\f A$. In this case, both sides
of~\refEq{1-00008.4}  evaluate to $\diver$.  For example, if $r$
is $\diver$, then
\begin{gather*}
(r+s);t=\diver;1=\diver\\ \intertext{and}
r;t+s;t=\diver;1+s;1=\diver+\diver=\diver\per
\end{gather*}
If $r+s$ is $1$, then  two possibilities arise.  The first is that
one of~$r$ and~$s$ is $1$, in which case both sides of
\refEq{1-00008.4}  evaluate to $1$.  For example, if $r$ is~$1$,
then
\begin{gather*}
(r+s);t=1;1=1\\ \intertext{and} r;t+s;t=1;1+s;1=1+s;1=1\per
\end{gather*}
 The second possibility is that one of $r$ and
$s$ is $\ident$  and the other  is~$\diver$, and in this  case
both sides of \refEq{1-00008.4} again evaluate to $1$.  For
example, if~$r$ is~$\ident$ and $s$ is $\diver$, then
\begin{gather*}
(r+s);t=(\ident+\diver);1=1;1=1\\ \intertext{and}
r;t+s;t=\ident;1+\diver;t=1+\diver=1\per
\end{gather*}
 This completes the
verification of (R8) in $\f A$.

Turn finally to the verification of (R8).  Just as in the case of
(R4), if none of the three elements $r$, $s$, and $t$ is $\diver$,
then $r+s$ cannot be $\diver$, and therefore all of the relative
products involved in the axiom yield the same value in $\ff B{10}$
as they do in $\ff M3$.   It follows that any such instance of
(R8) must hold in $\ff B{10}$.   There are several other cases in
which (R8) holds trivially in $\ff B{10}$.  If $t$ is $0$, then
both sides of (R8) reduce to $0$. If $t$ is $\ident$, then  both
sides of (R8) reduce to $r+s$. If $t$ is $\diver$, then both sides
of (R8) reduce to $\diver$. If $r$ is $0$, then both sides of (R8)
reduce to $s;t$, and analogously when $s$ is $0$. And if $r=s$,
then both sides of (R8) reduce to $r;t$.  There remains only  the
case when $t$ is $1$, and one of $r$ and $s$---say $r$---is
$\diver$, while the other---say $s$---is either $\ident$ or $1$.
In this case, $s;t$ is either $\ident;1$ or $1;1$, so its value is
$1$. Similarly,
  $r+s$ is either $\diver+\ident$ or $\diver+1$, so its value is
  $1$. Finally, $r;t=\diver;1=\diver$. Consequently,
\begin{equation*}
(r+s);t=1;1=1\qquad\text{and} r;t+s;t=\diver + 1=1\comma
\end{equation*} so (R8) holds in this case as well.

 Most instances of
(R4) yield the same result in $\ff B9$ as in $\f D$. Since $\f D$
is a relation algebra, these instances hold in $\f D$ and
therefore also in $\ff B9$. These instances include the following
cases. (1) At least one of the elements $r$ and $s$ is $0$; in
this case both sides of (R7) reduce to $0$. (2) At least one   of
the elements is $\ident$; in this case, both sides of (R7) reduce
to the converse of the other element. (3) At least one of the
elements is either $\diver$ or $1$ and the other element is
different from $0$ and $\ident$; in this case, both sides of (R7)
reduce to $1$. (4) Both elements are atoms different from
$\ident$; in this case the operations of relative multiplication
and converse yield the same result in $\ff B9$ as in $\f D$,
namely either $\diver$ or $1$. (5) Both elements are sums of at
least two atoms; in this case, the same is true of the converses
of these two elements, and therefore the relative products and
converses involved in (R7) yield the same result in $\ff B9$ as
they do in $\f D$.

There remain  the cases when one of $r$ and $s$ is one of the
atoms $a$ or $b$, and the other is one of the sums of atoms
$\ident +a$ or $\ident +b$. If $r$ is one of the atoms, and $s$
one of the sums of two atoms, then
\[(r;s)\ssm=(r\scir s)\ssm=r\scir s=s\scir r=s\scir r\ssm=s\ssm\conv\scir r\ssm=s\ssm;r\ssm\per\]
The first and last equalities follow  from the definition of
relative multiplication in $\ff B9$ and the assumptions on $r$ and
$s$. The second uses the fact that $r\scir s$ is either $\diver$
or $1$ in the case under consideration, and converse maps each of
these two elements to itself in $\ff B9$.  The third equality uses
the fact that relative multiplication in $\f D$ is commutative.
The fourth uses the fact that converse maps each atom to itself in
$\ff B9$, and the fifth uses the validity of (R6) in $\ff B9$. A
similar argument shows that  if $s$ is one of the atoms, and $r$
one of the sums of two atoms, then
\[(r;s)\ssm=(r\ssm\scir s)\ssm=r\ssm\scir s=s\scir r\ssm=s\ssm\conv\scir r\ssm=s\ssm;r\ssm\per\]
This completes the verification of (R7) in $\ff B9$.

\begin{table}[htb]
  \centering\begin{tabular}{|c|c|c|c|}\hline
   \  $\,;\,$ \ & \ $\{0\}$ \  & \ $\{1\}$ \  & \ $\{2\}$ \ \\ \hline
     $\{0\}$  &$\{0\}$ & $\{1\}$ & $\{2\}$ \\ \hline
     $\{1\}$ & $\{1\}$ & $\{0\}$ & $\varnot$ \\ \hline
     $\{2\}$   & $\{2\}$ & $\varnot$ & $\{0\}$ \\ \hline
  \end{tabular}\medskip
  \caption{Table for the operation $\,;\,$ on atoms.}\label{Ta:mckinsey2}
\end{table}

 Consider the algebra $\f
A=(A\smbcomma +\smbcomma -\smbcomma ;\smbcomma\conv\smbcomma
\ident)$ in which the universe $A$ is the set of all subsets of
$\{0,1,2\}$, while $\,+\,$ and $\,-\,$ are the set-theoretic
operations of union and complement, $\,\conv\,$ is the identity
function on the universe, $\ident$ is the singleton $\{0\}$, and
$\,;\,$ is defined on atoms $\{0\}$, $\{1\}$, and $\{2\}$
according to \refTa{mckinsey2}, and then extended to all of $A$ so
as to be  distributive (see \refTa{mckinsey1}).

In the verification of (R3), two further observations are helpful.
First of all, addition is an \textit{idempotent} operation in the
sense that $r+r=r$ for every  $r$.  Second, the operation of
complement is an automorphism of the Boolean part of $\ff A2$. It
follows that if any particular  assignment of elements to the
variables $r$ and $s$ makes~(R3) true, then the assignment
obtained by interchanging the occurrences (if any) of $\ident$ and
$1$ also makes (R3) true. This reduces  the number of cases that
must be checked.

If $r$ is $0$, then
\begin{multline*}
-(-r+s)+-(-r+-s)=-(-0+s)+-(-0+-s)\\=-(0+s)+-(0+-s)=-s+-(-s)=-s+s=0=r\comma
\end{multline*}
by the definitions of complement and addition, and the fact that
the sum of any element and its complement  is always $0$ in $\ff
A2$. Similarly, if $s$ is $0$, then
\begin{multline*}
  -(-r+s)+-(-r+-s)=-(-r+0)+-(-r+-0)\\=-(-r)+-(-r)=r\per
\end{multline*}
 If $r$ is $\ident$, then the
left side of (R3) reduces to
\[-(1+s)+-(1+-s)\per\] Take $s=\ident$ to arrive at
\begin{multline*}
-(1+s)+-(1+-s)=-(1+\ident)+-(1+-\ident)\\=-(1+\ident)+-(1+
1)=-0+-1=\ident=r\per
\end{multline*}
Taking $s=1$ yields a similar result. The remaining case of (R3),
when $r$ is $1$, follows by the automorphism properties of
complement mentioned above.

The validity in $\ff B{10}$ of the associative law (R4)  follows
readily from the preceding observations. Any instance of (R4) in
which at least one of the three elements~$r$,~$s$, and $t$ is
$\diver$ must hold in $\ff B{10}$, because both sides of (R4)
reduce to $\diver$, by the definition of relative multiplication.
Similarly, if two of the elements---either $r$ and~$s$, or $r$ and
$t$, or $s$ and $t$---are $0$ and $1$ respectively (or vice
versa), then both sides of (R4) again reduce to $\diver$. For
instance, if $r$ is $0$ and $t$ is $1$, then all four possible
values for~$s$ yield
\[r;(s;t)=0;(s;1)=\diver\qquad\text{and}\qquad (r;s);1=(0;s);1=\diver\per\]
In every other case, the computation of each side of (R4) yields
the same result in~$\ff B{10}$ as it does in $\ff M3$, and
therefore any such instance of (R4) must hold in $\ff B{10}$.

There remains the case when $t$ is $1$. If $r=s$, then both sides
of (R8) reduce to~$r;t$, so it may be assumed that $r\neq s$.  If
  $r+s$ is $\ident$, then one of $r$ and $s$ is $\ident$ and the other is $0$  (because   $\ident$ is an atom), so both
sides of (R8) reduce to $1$. For example, if $r$ is $\ident$, then
$s$ is $0$, so that
\begin{align*}
(r+s);t=\ident;1=1\qquad&\text{and}\qquad
r;t+s;t=\ident;1+0;1=1+\diver=1\per\\ \intertext{Similarly,
if~$r+s$ is $\diver$, then one of $r$ and $s$ must be~$\diver$,
and the other must be $0$ (because $\diver$ is an atom), so both
sides of~(R8) evaluate to~$\diver$. For example, if $r$
is~$\diver$, then $s$ is $0$, so that}
(r+s);t=\diver;1=\diver\qquad&\text{and}\qquad
r;t+s;t=\diver;1+0;1=\diver+\diver=\diver\per\\ \intertext{If
$r+s$ is $1$, then  two possibilities arise.  The first is that
one of~$r$ and~$s$ is $1$, in which case both sides of (R8)
evaluate to $1$.  For example, if $r$ is~$1$, then}
(r+s);t=1;1=1\qquad&\text{and}\qquad r;t+s;t=1;1+s;1=1+s;1=1\per\\
\intertext{The second possibility is that one of $r$ and $s$ is
$\ident$  and the other  is~$\diver$, and in this  case both sides
of (R8) again evaluate to $1$.  For example, if~$r$ is~$\ident$
and $s$ is $\diver$, then} (r+s);t=1;1=1\qquad&\text{and}\qquad
r;t+s;t=\ident;1+\diver;t=1+\diver=1\per
\end{align*}
 This completes the
verification of (R8) in $\ff B{10}$.

 r
$\diver$ is $\ident$ or $1$, then at least one of $r$ and $s$ is
either $\ident$ or $1$ (because $\ident$ and $\diver$ are atoms).

If $r=s$, then both sides of (R8) reduce to~$r;t$, so it may be
assumed that $r\neq s$.  If
  $r+s$ is $\ident$, then one of $r$ and $s$ is $\ident$ and the other is $0$  (because   $\ident$ is an atom), so both
sides of (R8) reduce to $1$. For example, if $r$ is $\ident$, then
$s$ is $0$, so that
\begin{align*}
(r+s);t=\ident;1=1\qquad&\text{and}\qquad
r;t+s;t=\ident;1+0;1=1+\diver=1\per\\ \intertext{Similarly,
if~$r+s$ is $\diver$, then one of $r$ and $s$ must be~$\diver$,
and the other must be $0$ (because $\diver$ is an atom), so both
sides of~(R8) evaluate to~$\diver$. For example, if $r$
is~$\diver$, then $s$ is $0$, so that}
(r+s);t=\diver;1=\diver\qquad&\text{and}\qquad
r;t+s;t=\diver;1+0;1=\diver+\diver=\diver\per\\ \intertext{If
$r+s$ is $1$, then  two possibilities arise.  The first is that
one of~$r$ and~$s$ is $1$, in which case both sides of (R8)
evaluate to $1$.  For example, if $r$ is~$1$, then}
(r+s);t=1;1=1\qquad&\text{and}\qquad r;t+s;t=1;1+s;1=1+s;1=1\per\\
\intertext{The second possibility is that one of $r$ and $s$ is
$\ident$  and the other  is~$\diver$, and in this  case both sides
of (R8) again evaluate to $1$.  For example, if~$r$ is~$\ident$
and $s$ is $\diver$, then} (r+s);t=1;1=1\qquad&\text{and}\qquad
r;t+s;t=\ident;1+\diver;t=1+\diver=1\per
\end{align*}

 The next equivalence is the key step in
the argument.{\allowdisplaybreaks
\begin{align*}
  (r;s)\ssm\cdot t=0\qquad&\text{if and only if}\qquad
  (s\ssm;r\ssm)\cdot t=0\per\tag{2}\label{Eq:lm9.2}\\
  \intertext{For the proof, observe that}
   (r;s)\ssm\cdot t=0\qquad&\text{if and only if}\qquad (r;s)\cdot t\ssm=0\\
   &\text{if and only if}\qquad (r\ssm;t\ssm)\cdot
  s=0\comma\\
  &\text{if and only if}\qquad [s\ssm;(r\ssm;t\ssm)]\cdot\ident=0\comma\\
  &\text{if and only if}\qquad [(s\ssm; r\ssm);t\ssm]\cdot\ident=0\comma\\
 &\text{if and only if}\qquad [(s\ssm; r\ssm)\ssm\conv;t\ssm]\cdot\ident=0\comma\\
 &\text{if and only if}\qquad  (s\ssm; r\ssm)\ssm\cdot t\ssm=0\comma\\
   &\text{if and only if}\qquad (s\ssm; r\ssm)\cdot
   t\ssm\conv=0\comma\\
   &\text{if and only if}\qquad (s\ssm; r\ssm)\cdot t=0\per
\end{align*}}
The first equivalence uses Boolean algebra and \refL{lm8} (with
$t$ and $r;s$ in place of~$r$ and $s$ respectively), the second
uses \refEq{r12} (with $t\ssm$ in place of $t$), the third uses
Boolean algebra and  \refEq{lm9.1} (with $s$ and $r\ssm;t\ssm$ in
place of $r$ and $s$ respectively), the forth uses the associative
law (R4), the fifth uses the first involution law (R6), the sixth
uses \refEq{lm9.1} (with $(s\ssm;r\ssm)\ssm$ and $t\ssm$ in place
of $r$ and $s$ respectively), the seventh uses Boolean algebra and
\refL{lm8} (with $t\ssm$ and $s\ssm;r\ssm$   in place of $r$
and~$s$), and the last uses (R6).

Turn now to the proof of the second involution law. Obviously,
\begin{align*}
(r;s)\ssm\cdot-[(r;s)\ssm]&=0\comma\\ \intertext{by Boolean
algebra, so} (s\ssm;r\ssm)\cdot -[(r;s)\ssm]&=0\comma
\end{align*}  by \refEq{lm9.2} (with
$-[(r;s)\ssm]$ in place of $t$).  It follows by Boolean algebra
that
\begin{equation*}\tag{3}\label{Eq:lm9.3}
s\ssm;r\ssm\le(r;s)\ssm\per
\end{equation*}
 Similarly, it is obvious that
\begin{align*}
(s\ssm;r\ssm)\cdot-(s\ssm;r\ssm)&=0\comma\\ \intertext{by Boolean
algebra.\, so} (r;s)\ssm\cdot -(s\ssm;r\ssm)&=0\comma
\end{align*}
by \refEq{lm9.2} (with $-(s\ssm;r\ssm)$ in place of $t$). It
follows by Boolean algebra that
\begin{equation*}\tag{4}\label{Eq:lm9.4}
(r;s)\ssm\le s\ssm;r\ssm\per
\end{equation*}
 Combine \refEq{lm9.3} and \refEq{lm9.4} to arrive at the second
involution law.
\end{proof}

Using the preceding lemma, it is now possible to derive a dual
version of \refEq{r12}.
\begin{lemma}\label{L:lm11} $
(r;s)\cdot t=0$ if and only if $(t;s\ssm)\cdot
 r=0$\per
\end{lemma}
\begin{proof} The proof is  dual to the argument  in the first two paragraphs of
the proof of \refL{lm2}.
  If $(r;s)\cdot t=0$, then
    $t\le -(r;s)$, by Boolean algebra, and
therefore
\[t;s\ssm\le  -(r;s);s\ssm\le -r\comma\] by  the left-hand
monotony law for relative multiplication (\refL{lm1}) and
\refL{lm10}. Consequently, $(t;s\ssm)\cdot r=0$, by  Boolean
algebra.

On the other hand, if~$(t;s\ssm)\cdot
 r=0$\comma then  $(r;s\ssm\conv)\cdot t=0$, by the results of the previous paragraph (with $r$ and  $t$ interchanged, and with $s\ssm$ in place
 of $s$).  Apply~(R6) to conclude that $(r;s)\cdot
 t=0$.\end{proof}

\begin{proof}The proof is a slightly modified version of part of the proof of Theorem 2.3 in \cite{ct}.
Both $r$ and $s$ are  below $r+s$, by Boolean algebra, so both
$r\ssm$ and $s\ssm$ are   below $(r+s)\ssm$, by \refL{lm5}. Use
Boolean algebra   to arrive at
\begin{equation*}\label{Eq:lm10.1}\tag{1}
  r\ssm+s\ssm\le (r+s)\ssm\per
\end{equation*}

The proof of the reverse inequality requires  bit more work.
Obviously,
\begin{alignat*}{3}
r\ssm\cdot[-(r\ssm)\cdot-(s\ssm)]&=0&\qquad&\text{and}&\qquad
s\ssm\cdot[-(r\ssm)\cdot-(s\ssm)]&=0\comma\\ \intertext{by Boolean
algebra, so}
r\cdot[-(r\ssm)\cdot-(s\ssm)]\ssm&=0&\qquad&\text{and}&\qquad
s\cdot[-(r\ssm)\cdot-(s\ssm)]\ssm&=0\comma\label{Eq:lm10.01}\tag{2}
\end{alignat*}
by the implication from right to left in  \refL{lm8} (with $r\ssm$
and $-(r\ssm)\cdot-(s\ssm)$ in place of $r$ and $s$ for the first
equation, and $s\ssm$ and $-(r\ssm)\cdot-(s\ssm)$ in place of $r$
and~$s$ for the second equation). Use \refEq{lm10.01} and Boolean
algebra to obtain
\begin{align*}
  (r+s)\cdot [-(r\ssm)\cdot-(s\ssm)]\ssm&=0\per\\
  \intertext{Use the implication from left to right in \refL{lm8}  (with $r+s$ and $-(r\ssm)\cdot-(s\ssm)$ in place of $r$ and $s$
respectively) to arrive at} (r+s)\ssm\cdot
[-(r\ssm)\cdot-(s\ssm)]&=0\per
\end{align*}  Use Boolean algebra one more time to conclude that
\begin{equation*}\tag{3}\label{Eq:lm10.2}(r+s)\ssm\le -[-(r\ssm)\cdot-(s\ssm)]=r\ssm+s\ssm\per\end{equation*}

Together, \refEq{lm10.1} and \refEq{lm10.2} yield the desired
conclusion.
\end{proof}

\begin{lemma}\label{L:lm3}
  $r\ssm;-r\le\diver$\per
\end{lemma}
\begin{proof}
  Take $s$ to be $\ident$ in \refEq{r10}, and use (R5) and the definition of $\diver$, to obtain
  \[r\ssm;-r=r\ssm;-(r;\ident)\le -\ident=\diver\per\]
\end{proof}

\begin{lemma}\label{L:lm4}
  $(r\cdot s)\ssm\le r\ssm$\per
\end{lemma}
\begin{proof}
  Take $r$ and $s$ to be $r\ssm\cdot s$ and $-r$ respectively in
  \refEq{r10}, and use Boolean algebra, to obtain
\begin{equation*}\tag{1}\label{Eq:lm4.1}
  (r\ssm\cdot s)\ssm;-[(r\ssm\cdot s);-r]\le r\per
\end{equation*} Since $r\ssm\cdot s\le r\ssm$, by Boolean algebra, the left-hand monotony law for relative multiplication
(\refL{lm1}) and \refL{lm3} imply that
\[(r\ssm\cdot s);-r\le r\ssm;-r\le\diver\comma\]
so
\begin{equation*}\tag{2}\label{Eq:lm4.2}
  \ident\le -[(r\ssm\cdot s);-r]\comma
\end{equation*}
by the definition of $\diver$ and Boolean algebra. Combine
\refEq{lm4.2} with \refEq{lm4.1}, and use (R5) and the right-hand
monotony laws for relative multiplication (\refL{lm1}), to arrive
at
\begin{equation*}\tag{3}\label{Eq:lm4.3}
(r\ssm\cdot s)\ssm=(r\ssm\cdot s)\ssm;\ident\le (r\ssm\cdot
s)\ssm;-[r\ssm\cdot s);-r]\le r\per
\end{equation*}
Replace $r$ by $r\ssm$ in the first and last terms of
\refEq{lm4.3}, and use (R6), to conclude that
\[(r\cdot s)\ssm=(r\ssm\conv\cdot s)\ssm\le r\ssm\comma\]
as desired.
\end{proof}

The preceding lemma easily implies the  \textit{monotony law for
converse} in the following strong form.
\begin{lemma}\label{L:lm5}
   $r\le s$ if and only if $r\ssm\le s\ssm$\per
\end{lemma}
\begin{proof}
If $r\le s$, then $s\cdot r=r$, by Boolean algebra.  Combine this
last equation and \refL{lm4} (with $r$ and $s$ interchanged) to
arrive at
\[r\ssm=(s\cdot r)\ssm\le s\ssm\per\]

On the other hand, if  $r\ssm\le s\ssm$, then the results of the
preceding paragraph imply that $r\ssm\conv\le s\ssm\conv$. Apply
(R6) to conclude that $r\le s$.
\end{proof}

The monotony law for converse, in turn, easily implies that zero
is its own converse.
\begin{lemma}\label{L:lm6}
  $0\ssm=0$\per
\end{lemma}
\begin{proof}
  The  element $0$ is below every element, so $0\le 0\ssm$.  Apply
  the monotony law for converse to this inequality, and use (R6), to obtain
  \[0\ssm\le 0\ssm\conv=0\per\]
\end{proof}

The next law says that complement commutes with converse.
\begin{lemma}\label{L:lm7}
 $ -(r\ssm)=(-r)\ssm$\per
\end{lemma}
\begin{proof}
  The proof proceeds by deriving a series of laws each of which
  holds for all instances of the variables.  Apply \refL{lm4} with
  $(-r)\ssm$ in place of $r$, and then use~(R6), to get
  \[[(-r)\ssm\cdot s]\ssm\le[(-r)\ssm]\ssm=-r\per\] It follows
  from this inequality and Boolean algebra that
\begin{equation*}\tag{1}\label{Eq:lm7.1}
  r\cdot[(-r)\ssm \cdot s]\ssm=0\per
\end{equation*} Replace $r$ and $s$ with $r\ssm$ and $r$
respectively in \refEq{lm7.1} to obtain
\begin{equation*}\tag{2}\label{Eq:lm7.2}
r\ssm\cdot[(-(r\ssm))\ssm \cdot r]\ssm=0\per
\end{equation*}
Now
\[(r\cdot[-(r\ssm)]\ssm)\ssm\le r\ssm\comma\] by \refL{lm4} (with
$[-(r\ssm)]\ssm$ in place of $s$), so
\begin{equation*}\tag{3}\label{Eq:lm7.3}
  r\ssm\cdot
  (r\cdot[-(r\ssm)]\ssm)\ssm=(r\cdot[-(r\ssm)]\ssm)\ssm\comma
\end{equation*} by Boolean algebra. Combine \refEq{lm7.2} and
\refEq{lm7.3}, and use Boolean algebra, to arrive at
\[(r\cdot[-(r\ssm)]\ssm)\ssm=0\per\] Form the converse of both
sides of this equation, and use (R6) and \refL{lm6}, to conclude
that
\begin{equation*}
  r\cdot[-(r\ssm)]\ssm=(r\cdot[-(r\ssm)]\ssm)\ssm\conv=0\ssm=0\per
\end{equation*}

 An equivalent formulation of the preceding law is that
\begin{equation*}\tag{4}\label{Eq:lm7.4}
 [-(r\ssm)]\ssm\le -r\comma
\end{equation*}
by Boolean algebra. Form the converse of both sides of
\refEq{lm7.4}, and use the monotony law for converse (\refL{lm5})
and (R6), to obtain
\begin{equation*}\tag{5}\label{Eq:lm7.05}
  -(r\ssm)=([-(r\ssm)]\ssm)\ssm\le(-r)\ssm\per
\end{equation*} Form the complement of both sides of
\refEq{lm7.4}, and use Boolean algebra, to see that
\begin{equation*}\tag{6}\label{Eq:lm7.06}
  r=-(-r)\le -( [-(r\ssm)]\ssm)\per
\end{equation*} Replace $r$ by $r\ssm$ in \refEq{lm7.06}, and use
(R6), to get
\begin{equation*}\tag{7}\label{Eq:lm7.07}
  r\ssm\le -[ (-[(r\ssm)\ssm])\ssm]=-[ (-r)\ssm]\per
\end{equation*}
Form the complement of both sides of \refEq{lm7.07}, and use
Boolean algebra, to obtain
\begin{equation*}\tag{8}\label{Eq:lm7.08}
  (-r)\ssm=-(-[(-r)\ssm])\le-(r\ssm)\per
\end{equation*} Combine \refEq{lm7.05} and \refEq{lm7.08} to
arrive at the desired  equation.
\end{proof}

\begin{lemma}\label{L:lm8}
$r\cdot s\ssm=0$ if and only if $r\ssm\cdot s=0$\per\end{lemma}
\begin{proof}
 The lemma is a consequence of the following series
of equivalences:
\begin{align*}
  r\cdot s\ssm=0\qquad&\text{if and only if}\qquad r\le
  -(s\ssm)\comma\\
  &\text{if and only if}\qquad r\ssm\le
  [-(s\ssm)]\ssm\comma\\
   &\text{if and only if}\qquad r\ssm\le
  (-s)\ssm\conv\comma\\
  &\text{if and only if}\qquad r\ssm\le
  -s\comma\\
  &\text{if and only if}\qquad r\ssm\cdot s=0\per
\end{align*} The first and last steps use Boolean algebra, the
second step uses the monotony law for converse in the strong form
of \refL{lm5}, the third step uses \refL{lm7}, and the fourth step
uses (R6).
\end{proof}

\textit{Throughout this section we use the axiom system $\mc S$
that consists of  equations} (R1)--(R6), (R8), \refEq{r8}, and
(R10). The  goal is to derive (R7) and (R9) on the basis of this
 system.  We begin with a series of lemmas that establish
important and well-known relation algebraic laws (see
Chin-Tarski\,\cite{ct}) within the framework of $\mc S$. First,
observe   that \refEq{r10}, \refEq{r12}, and the monotony laws for
relative multiplication are all derivable from $\mc S$, by the
remarks in \refS{asec1.1}, and by Lemmas \ref{L:lm1} and
\ref{L:lm2} in \refS{variant}.  Consequently, axiom (R7) is
derivable from $\mc S$, by \refL{lm3.2} in \refS{variant}. All of
these results may therefore be used freely below.